\documentclass[english,a4paper]{amsart}
\usepackage{babel}

\usepackage{enumerate}

\usepackage[T1]{fontenc}
\usepackage[utf8]{inputenc}

\usepackage{amsmath}
\usepackage{amssymb}
\usepackage{amsthm}
\usepackage{xypic}
%\usepackage{color}

%%%%%%%%%%%%%%%%%%%%%%%%%%%%%%%%%%%%%%%%%%
%% THEOREMS %%%%%%%%%%%%%%%%%%%%%%%%%%%%%%
%%%%%%%%%%%%%%%%%%%%%%%%%%%%%%%%%%%%%%%%%%

\theoremstyle{plain}
\newtheorem{theorem}{Theorem}%[section]
\newtheorem*{theorem*}{Theorem}
\newtheorem{proposition}[theorem]{Proposition}
\newtheorem{corollary}[theorem]{Corollary}
\newtheorem*{corollary*}{Corollary}
\newtheorem{lemma}[theorem]{Lemma}

\theoremstyle{remark}
\newtheorem{remark}[theorem]{Remark}
\newtheorem{example}[theorem]{Example}

\theoremstyle{definition}
\newtheorem{definition}[theorem]{Definition}

\newcommand{\Zset}{\mathbb{Z}}
\newcommand{\Cset}{\mathbb{C}}
\newcommand{\field}[1][]{\mathbb{F}_{#1}}

\newcommand{\rann}[2]{\operatorname{rann}_{#1}\left( #2 \right)}
\newcommand{\lann}[2]{\operatorname{lann}_{#1}\left( #2 \right)}
\newcommand{\lorth}[1]{{}'#1}
\newcommand{\rorth}[1]{#1'}

\newcommand{\bilin}[2][]{\left\langle #2 \right\rangle_{#1}}

\newcommand{\lt}[1]{\operatorname{lt}(#1)}
\newcommand{\Soc}{\operatorname{Soc}}
\newcommand{\card}[1]{|#1|}

\begin{document}

\title[Frobenius algebras and linear codes]{Some remarks on non projective Frobenius algebras and linear codes}

\thanks{Research partially supported by grants {MTM2013-41992-P} from {MINECO} and {MTM2016-78364-P} from {Agencia Estatal de Investigaci\'{o}n (AEI) and from Fondo Europeo de Desarrollo Regional (FEDER)}}

\author[G\'omez et. al.]{Jos\'e G\'omez-Torrecillas}
\address{CITIC and Department of Algebra, University of Granada}
\email{gomezj@ugr.es}

\author[]{Erik Hieta-aho}
\address{Department of Mathematics and Computer Science, University of Puget Sound}
\email{ehietaaho@pugetsound.edu}

\author[]{F. J. Lobillo}
\address{CITIC and Department of Algebra, University of Granada}
\email{jlobillo@ugr.es}

\author[]{Sergio L\'{o}pez-Permouth} 
\address{Department of Mathematics, Ohio University}
\email{lopez@ohio.edu}

\author[]{Gabriel Navarro}
\address{CITIC and Department of Computer Science and AI, University of Granada}
\email{gnavarro@ugr.es}

%\date{\today}

\maketitle

\begin{abstract}
With a small suitable modification, dropping the projectivity condition, we extend the notion of a Frobenius algebra to grant that a Frobenius algebra over a Frobenius commutative ring is itself a Frobenius ring. The modification introduced here also allows Frobenius finite rings to be precisely those rings which are Frobenius finite algebras over their characteristic subrings. From the perspective of linear codes, our work expands one's options to construct new finite Frobenius rings from old ones. We close with a discussion of generalized versions of the MacWilliams identities that may be obtained in this context. 
%\keywords{ Frobenius ring \and non projective Frobenius algebra  \and linear code \and skew polynomial ring \and McWilliams identity}
%\subclass{MSC 94B05 \and MSC 16L60 \and 94B15 }
\end{abstract}

\section{Introduction}

The core ingredient of the definition of a Frobenius algebra $A$ finitely generated as a module over its commutative ground ring $K$, as it appears in the literature, is the requirement that there exist a non degenerate associative $K$--bilinear form on  $A$; this requirement mimics accurately the definition of a Frobenius algebra over a field. However, that definition also includes the somewhat technical requirement that $A$ be projective as a $K$--module. At first, since all modules over a field are indeed free and hence projective, the extra condition appears innocuous. Furthermore, the extra condition apparently served the purpose of affording the result, mentioned without a detailed proof in \cite[p. 434]{Lam:1999}, that every Frobenius algebra over a Frobenius commutative ring is a Frobenius ring. Unfortunately, not every finite Frobenius ring is projective over its characteristic subring. 

In this paper we remove the projectivity technical requirement from the usual definition of a Frobenius algebra over a commutative ring and propose that said expression be used instead for a  properly modified notion. We show that thus extending the definition affords us a very natural result, namely, that  a finite ring $A$ of characteristic $n$ is Frobenius if and only if it is a (not necessarily projective) Frobenius  algebra over its characteristic ring $\Zset_n$. We, therefore, adopt the expression non projective Frobenius algebra over a ring when all else is satisfied but possibly not the projectivity requirement. Notice that, as is a common practice in mathematics, our expression “non projective”, is a less pedantic option to express “not necessarily projective”.  

An additional benefit from our approach is that we may adapt, without using the projectivity of $A$ over $K$, the arguments in the usual proof that Frobenius algebras over a field are Frobenius as rings to the case of non projective Frobenius algebras over a ring  (Theorem \ref{FrobeniuseanFrobenius}). This result is interesting in its own right and has potential to open many doors in the realm of applications.   Note that one may also derive Theorem \ref{FrobeniuseanFrobenius} from the recent analysis of the Frobenius property for Artin algebras in \cite{Iovanov:2016}.  However, while more general, this approach is significantly more technical and we felt that there is value in sharing our proof here in hopes that it would be more accessible for a larger group of interested readers.  

One of the motivations behind this extension of the expression {\it non projective Frobenius algebra over a ring} arises because of applications of this notion in the algebraic theory of error-correcting codes. Finite Frobenius rings are extensively used as alphabets for ring-linear block codes;  Frobenius algebras over a finite field are an important example of Frobenius rings. In contrast, a well known characterization of Frobenius rings \cite{Wood:1999} may be rephrased by asserting that a finite ring is Frobenius if and only if there exists a non degenerate associative bilinear form over its characteristic subring.  

The fundamental problem in the theory of linear codes is the analysis and classification of linear codes over an alphabet, usually a finite field $F$.  Since there is a tremendous number of subspaces for the vector space $F^n$ the task at hand is quite monumental. A common frequently successful practice has been to filter the codes considered in the following manner: add an $F$-algebra structure on the vector space $F^n$ (let us refer to $F^n$, equipped with the additional structure, as $A_n$ to remind us of the fact that it is now an algebra of dimension $n$) and consider, say, only those subspaces which are (left, right, two-sided) ideals of $A_n$.  We refer to the additional algebraic structure on $F^n$ as the {\it ambient} (or the {\it ambient space}) of the codes targeted and to the criterion applied to choose the codes we wish to consider as the {\it filter} being applied. 

As the interest in studying codes in ambients endowed with additional algebraic structure expanded from considering field alphabets to ring alphabets, Frobenius rings have tended to become central to the conversation.  The reason for this preference goes back to papers that show that the fundamental structural properties of codes over fields extend to codes that have Frobenius rings as their alphabets.  In fact, there is plenty of evidence to show that Frobenius rings are precisely the extent to which this type of considerations can be extended if one wishes to acquire the desired structural properties. For the properties to hold, the alphabet must be Frobenius.

In this paper, we investigate an alternative role that Frobenius rings appear to play very naturally in this context.  It seems to be the case that coding-theoretic structural properties are attained, regardless of the structure of the alphabet, when the ambient is a non projective Frobenius algebra over a commutative ring possibly other than the alphabet.  For example, it may be that the ambient is a non projective Frobenius algebra over its characteristic subring.

We illustrate our ideas by taking advantage of the fact that every finite Frobenius ring of characteristic $n$ can be seen as a non projective Frobenius $\Zset_n$--algebra. Our examples include constructing a family of  non projective Frobenius algebras based on factor algebras of skew polynomial algebras and using them as the ambient algebra for a  wide class of analogues to skew cyclic block codes.

This note is organized as follows. Section \ref{Anuladores} provides, with the generality needed here, the statements and proofs of  preliminary results on balanced (also called associative) bilinear forms over bimodules; we only include proofs that we deemed illustrative. Section \ref{sec:Frobeniusean} provides our proposed definition of a non projective Frobenius algebra over a commutative ring and various equivalent characterizations; as mentioned above, projectivity of the algebra over the base ring is not required.  One of these equivalent conditions is the existence of a Frobenius functional, which will be seen to generalize the generating character and play a similar role to it. In Section \ref{FrobeniuseanAnn}, we extend  results from \cite{Szabo/Wood:2017} on annihilators associated to a non degenerate bilinear form from a finite Frobenius ring to a non projective Frobenius algebra. Section \ref{FF} contains our observation that, given an algebra $R$ over a Frobenius commutative ring $K$ such that $R$ is finitely generated as a $K$--module, then $R$ is a non projective Frobenius algebra over $K$ if and only if $R$ is a Frobenius ring. This, in particular, applies to finite rings, viewed as algebras over their characteristic subrings. We also include a method, based on skew polynomials, to construct new Frobenius algebras from a given one. From the point of view of codes, this gives a way to construct new finite Frobenius rings from old ones; this is further discussed in Section \ref{Codes}. That section also contains a discussion of how the general results on bilinear forms defined on modules over non projective Frobenius algebras, developed in the previous sections, may be applied to get the main results of \cite{Szabo/Wood:2017}. We close that section with a discussion of those bilinear forms for which a version of MacWilliams identities stated in \cite{Szabo/Wood:2017} holds. 

\section{Preliminaries on bilinear forms and annihilators}\label{Anuladores}
All rings we consider will be unital and possibly non commutative. Let $A, B$ be rings and $M$ an $A-B$--bimodule. We will simply say that ${}_AM_B$ is a bimodule. We also use the notation ${}_AM$ to declare that $M$ is a left $A$--module, and $M_B$ for right $B$--modules. Systematic  introductions to the theory of modules over non commutative rings are \cite{Kasch:1982}, or \cite{Anderson/Fuller:1992}.
Everything in this section follows from \cite[\S 30]{Anderson/Fuller:1992}. For convenience, however, we state the results here in the form they are needed, and provide their proofs in this more limited scope to keep the paper self-contained. Given a second bimodule ${}_BN_{A}$, the abelian group $N^* = \hom_A(N,A)$ of all homomorphisms of right $A$--modules is endowed with the structure of a bimodule ${}_AN^*_{B}$ by the rule
\[
(afb)(n) = af(bn), \qquad a \in A, b \in B, f \in N^*, n \in N.
\]
A straightforward argument shows that the formula 
\[
\bilin{m,n} = \alpha(m)(n), \qquad m \in M, n \in N
\]
provides a bijective correspondence between $A$--bilinear maps 
\[ 
\bilin{-,-}: M \times N \to A
\]
 and homomorphisms of left $A$--modules 
 \[
 \alpha: M \to N^*.
 \]
  Moreover, $\bilin{-,-}$ is \emph{associative (or balanced)}, in the sense that $\bilin{mb,n} = \bilin{m,bn}$ for all $m \in M, n\in N, b \in B$, if and only if the corresponding $\alpha : M \to N^*$ is a homomorphism of right $B$--modules. 

Analogously, the additive group ${}^*M = \hom_A(M,A)$ of all homomorphisms of left $A$--modules is a $B-A$--bimodule via the rule
\[
(bfa)(m) = f(mb)a, \qquad b \in B, a \in A, f \in {}^*M. 
\]
The assignment
\[
\bilin{m,n} = \beta(n)(m), \qquad m \in M, n \in N
\]
gives a bijective correspondence between $A$--bilinear forms 
\[ \bilin{-,-} : M \times N \to A
\] 
and homomorphisms of right $A$--modules 
\[
\beta : N \to {}^*M. 
\]
Again, the condition of being the bilinear form associative is equivalent to require that $\beta$ is a homomorphism of left $B$--modules. 

\begin{definition}\label{ndegdef}
Let $\bilin{-,-} : M \times N \to A$ be an $A$--bilinear form, and $\alpha : M \to N^*$, $\beta : N \to {}^*M$ the corresponding homomorphisms of $A$--modules defined as before. We say that $\bilin{-,-}$ is right (resp. left) \emph{non degenerate} if $\alpha$ (resp. $\beta$) is injective. When $\bilin{-,-}$ is left and right non degenerate, we just say that the bilinear form is non degenerate. 
\end{definition}

A left module ${}_AL$ is an \emph{injective cogenerator} if the exactness of a sequence of morphisms of left $A$--modules 
\begin{equation*}
\xymatrix{X \ar[r] & Y \ar[r] & Z}
\end{equation*}
is equivalent to the exactness of the corresponding sequence of additive groups
\begin{equation*}
\xymatrix{\hom_A(Z,L) \ar[r] & \hom_A(Y,L) \ar[r] & \hom_A(X,L)}.
\end{equation*}
That is, ${}_AL$ is both injective and cogenerator (see  \cite[Proposition 18.14]{Anderson/Fuller:1992}). 

Recall that  a left artinian ring $A$ is Quasi-Frobenius (QF) if ${}_AA$ is an injective cogenerator. Equivalently, $A$ is right artinian and $A_A$ is an injective cogenerator (see \cite[Theorem 13.2.1]{Kasch:1982}).

The length of a right $A$--module $X$ will be denoted by $\lt{X_A}$, for a left $A$--module $Y$, by $\lt{{}_AY}$.  From now on, unless otherwise stated, we will assume that all $A$--modules are of finite length. A (left and right) artinian  ring $A$  is QF if and only if  $\lt{{}_AX^*} = \lt{X_A}$ for every finitely generated right $A$--module $X_A$ and $\lt{{}^*Y_A} = \lt{{}_AY}$ for every finitely generated left $A$--module ${}_AY$ (see \cite[Theorem 13.3.2]{Kasch:1982}).   We assume in the rest of this section that $A$ is a Quasi-Frobenius ring (see \cite[Ch. 13]{Kasch:1982} for various additional characterizations of these rings, including the original definition due to Nakayama).

\begin{lemma}\label{lmln}
If there is a non degenerate $A$--bilinear form $\bilin{-,-} : M \times N \to A$, then $\lt{{}_AM} = \lt{N_A}$.
\end{lemma}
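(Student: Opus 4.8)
The plan is to convert the two non-degeneracy conditions into injective module homomorphisms and then play them off against the two length-preserving dualities guaranteed by the QF hypothesis. Concretely, by the correspondences recalled just before Definition \ref{ndegdef}, the form $\bilin{-,-}$ determines a homomorphism of left $A$--modules $\alpha : M \to N^*$ and a homomorphism of right $A$--modules $\beta : N \to {}^*M$, and non degeneracy is precisely the assertion that both $\alpha$ and $\beta$ are injective.

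First I would record the two length inequalities coming from these injections. Since all modules in sight are assumed to be of finite length, an injective homomorphism of left (resp. right) $A$--modules can only decrease length; hence the injectivity of $\alpha$ gives $\lt{{}_AM} \le \lt{{}_AN^*}$, and the injectivity of $\beta$ gives $\lt{N_A} \le \lt{{}^*M_A}$.

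Next I would feed in the QF hypothesis in the form recalled above (the characterization via \cite[Theorem 13.3.2]{Kasch:1982}): applied to the finitely generated right module $N_A$ it yields $\lt{{}_AN^*} = \lt{N_A}$, and applied to the finitely generated left module ${}_AM$ it yields $\lt{{}^*M_A} = \lt{{}_AM}$. Substituting these identities into the two inequalities turns them into $\lt{{}_AM} \le \lt{N_A}$ and $\lt{N_A} \le \lt{{}_AM}$, and the resulting squeeze forces the desired equality $\lt{{}_AM} = \lt{N_A}$.

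The argument is short, so I do not anticipate a serious obstacle; the only points that need care are bookkeeping. One must match the bimodule sides correctly, reading $N^*$ as a left $A$--module and ${}^*M$ as a right $A$--module (the ambient $B$--structure plays no role here and may be ignored), and one must check that the standing finite-length assumption legitimises both the ``an injection does not increase length'' step and the applicability of the QF length identities, which are stated only for finitely generated modules. Both are immediate: finite length implies finitely generated, and it is exactly this hypothesis that makes the length function additive and monotone under submodules.
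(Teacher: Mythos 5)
Your proposal is correct and is essentially the paper's own proof: the same two injections $\alpha : M \to N^*$ and $\beta : N \to {}^*M$, the same QF length identities $\lt{{}_AN^*} = \lt{N_A}$ and $\lt{{}^*M_A} = \lt{{}_AM}$, combined into the same squeeze (the paper merely writes it as a single chain of inequalities). No gaps; the bookkeeping points you flag are exactly the ones the paper relies on implicitly.
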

\begin{proof}
The following computation, which uses that both $\alpha$ and $\beta$ are injective maps, gives the statement. 
\[ 
\lt{{}_AM} \leq \lt{{}_AN^*} = \lt{N_A} \leq \lt{{}^*M_A} = \lt{{}_AM}.
\]
\end{proof}

Next lemma, as well as Proposition \ref{dobledual} below, can be deduced from \cite[Theorem 30.1]{Anderson/Fuller:1992}. 

\begin{lemma}\label{ndeg}
Assume that  $\lt{{}_AM} = \lt{N_A}$. An $A$--bilinear form $\bilin{-,-} : M \times N \to A$ is left non degenerate if and only if it is right non degenerate. In such a case, both $\alpha$ and $\beta$ are isomorphisms. 
\end{lemma}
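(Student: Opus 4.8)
The plan is to reduce everything to two facts about finite length modules together with the cogenerator property of a Quasi-Frobenius ring. First I would record the two length identities supplied by the QF hypothesis: since $A$ is QF we have $\lt{{}_AN^*} = \lt{N_A}$ and $\lt{{}^*M_A} = \lt{{}_AM}$. Combining these with the standing assumption $\lt{{}_AM} = \lt{N_A}$ yields $\lt{{}_AM} = \lt{{}_AN^*}$ and $\lt{N_A} = \lt{{}^*M_A}$, so that $\alpha$ and $\beta$ are each homomorphisms between modules of the same finite length.

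Next I would invoke the elementary fact that a homomorphism $f \colon X \to Y$ between modules of equal finite length is injective if and only if it is surjective if and only if it is an isomorphism; this follows from additivity of length on the short exact sequence $0 \to \ker f \to X \to \operatorname{im} f \to 0$ together with $\lt{\operatorname{im} f} \leq \lt{Y}$, with equality precisely when $\operatorname{im} f = Y$. Applied to $\alpha$ and to $\beta$ separately, this already shows that each of them is injective exactly when it is an isomorphism, which will deliver the final assertion of the statement once the main equivalence is established.

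The heart of the argument is to link the injectivity of $\alpha$ with that of $\beta$, and here I would use that $A$ is a cogenerator on both sides, since, being QF, both ${}_AA$ and $A_A$ are injective cogenerators. Assume $\bilin{-,-}$ is right non degenerate, that is, $\alpha$ is injective; by the previous paragraph $\alpha$ is then surjective. Take $n \in N$ with $\beta(n) = 0$, i.e.\ $\bilin{m,n} = \alpha(m)(n) = 0$ for all $m \in M$. Surjectivity of $\alpha$ turns this into $\phi(n) = 0$ for every $\phi \in N^*$, and the cogenerator property of $A_A$ then forces $n = 0$; hence $\beta$ is injective, i.e.\ $\bilin{-,-}$ is left non degenerate. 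The converse is entirely symmetric, using that $\beta$ is surjective, the description $\bilin{m,n} = \beta(n)(m)$, and the cogenerator property of ${}_AA$.

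Finally, once one of the non degeneracy conditions holds, hence both, each of $\alpha$ and $\beta$ is injective and therefore an isomorphism by the finite length fact, giving the last sentence. I expect the only genuinely delicate point to be the cogenerator step: the length computations alone show merely that each of $\alpha$ and $\beta$ is injective if and only if bijective, and it is precisely the cogenerator property of $A$ that converts surjectivity of one of the maps into injectivity of the other. Everything else is bookkeeping with lengths and with the two presentations of the form.
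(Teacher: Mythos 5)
Your proof is correct and follows essentially the same route as the paper's: the QF length identities $\lt{{}_AN^*} = \lt{N_A}$ and $\lt{{}^*M_A} = \lt{{}_AM}$ make $\alpha$ (resp.\ $\beta$) an injective map between modules of equal finite length, hence an isomorphism, and then surjectivity of one map plus the injective cogenerator property of $A_A$ (resp.\ ${}_AA$) yields injectivity of the other, with symmetry closing the argument. The only difference is that you spell out the ``injective between equal-length modules implies isomorphism'' step and both cogenerator sides explicitly, which the paper leaves terse.
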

\begin{proof}
Assume that $\bilin{-,-}$ is right non degenerate, that is, the homomorphism of left $A$--modules $\alpha : M \to N^*$ is injective. Since $A$ is QF, $\lt{{}_AN^{*}} = \lt{N_A} = \lt{{}_AM}$. Therefore, $\alpha$ is an isomorphism. In order to prove that $\bilin{-,-}$ is left non degenerate, let $n \in N$ such that $\beta(n) = 0$. This means that, for every $m \in M$, $0 = \beta(n)(m) = \bilin{m,n} = \alpha(m)(n)$. Since $\alpha$ is surjective, we get that $\varphi (n) = 0$ for all $\varphi \in N^*$. This implies that $n = 0$, since $A$ is an injective cogenerator right $A$--module. Therefore, $\beta$ is injective. We so far have proved that if $\bilin{-,-}$ is right non degenerate, then it is left non degenerate, and $\alpha$ is an isomorphism. By symmetry, we get the full statement of the lemma. 
\end{proof}

Now, for  an $A$--bilinear form $\bilin{-,-} : M \times N \to A$, and subsets $S \subseteq N$, $T \subseteq M$, we define 
\begin{align*}
\lorth{S} &= \left\{ m \in M ~|~ \bilin{m,s} = 0, \,\forall s \in S \right\}, \\
\rorth{T} &= \left\{ n \in N ~|~ \bilin{t,n} = 0, \,\forall t \in T \right\}.
\end{align*}
Clearly, $\lorth{S}$ is a left $A$--submodule of $M$, while $\rorth{T}$ is a right $A$--submodule of $N$. 

\begin{proposition}\label{dobledual}
Assume that $\bilin{-,-}$ is non degenerate. For every $A$--submodule $X$ (resp. $Y$) of $N_A$  (resp. of ${}_AM$) we have that $\rorth{(\lorth{X})} = X$ and $\lorth{(\rorth{Y})} = Y$. 
\end{proposition}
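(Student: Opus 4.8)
The plan is to reduce everything to the two isomorphisms supplied by non degeneracy. Since $\bilin{-,-}$ is non degenerate, Lemma~\ref{lmln} gives $\lt{{}_AM} = \lt{N_A}$, and then Lemma~\ref{ndeg} tells us that both $\alpha : M \to N^*$ and $\beta : N \to {}^*M$ are isomorphisms of $A$--modules. I will also use, repeatedly, that $A$ is an injective cogenerator on both sides, which is part of the QF hypothesis. By the evident symmetry between the pairs $(M,\alpha)$ and $(N,\beta)$, it suffices to establish $\rorth{(\lorth{X})} = X$ for a right submodule $X \subseteq N_A$; the identity $\lorth{(\rorth{Y})} = Y$ then follows by interchanging the roles of $M$ and $N$.

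The inclusion $X \subseteq \rorth{(\lorth{X})}$ is immediate from the definitions, since every $x \in X$ satisfies $\bilin{t,x} = 0$ for all $t \in \lorth{X}$. For the reverse inclusion I would argue by contraposition. Suppose $n \in N \setminus X$. Passing to the quotient right module $N/X$, the image of $n$ is nonzero, so the cogenerator property of $A_A$ produces a homomorphism $\bar{g} : N/X \to A$ of right $A$--modules that does not vanish on that image. Composing with the canonical projection $N \to N/X$ yields $g \in N^* = \hom_A(N,A)$ with $g|_X = 0$ but $g(n) \neq 0$. Because $\alpha$ is surjective, $g = \alpha(m)$ for some $m \in M$; the condition $g|_X = 0$ reads $\bilin{m,x} = \alpha(m)(x) = 0$ for all $x \in X$, i.e. $m \in \lorth{X}$. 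If $n$ were in $\rorth{(\lorth{X})}$, we would then get $\bilin{m,n} = 0$, contradicting $g(n) = \alpha(m)(n) = \bilin{m,n} \neq 0$. Hence $n \notin \rorth{(\lorth{X})}$, which proves $\rorth{(\lorth{X})} \subseteq X$.

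The crux is this reverse inclusion, and it rests entirely on two consequences of $A$ being QF: the surjectivity of $\alpha$, so that every functional separating $n$ from $X$ is realized through the form, and the cogenerator property, so that such a separating functional exists at all. I expect these to be the only nontrivial inputs, the rest being formal. As an alternative to the separation argument one could count lengths, writing $\lorth{X} \cong \ker(N^* \to X^*)$ via $\alpha$, using injectivity of $A_A$ to obtain $\lt{{}_A\lorth{X}} = \lt{N_A} - \lt{X_A}$, and similarly $\lt{\rorth{(\lorth{X})}_A} = \lt{{}_AM} - \lt{{}_A\lorth{X}} = \lt{X_A}$ by Lemma~\ref{lmln}; combined with the containment already proved and the finiteness of all lengths, this forces equality. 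I find the cogenerator argument cleaner, so I would present it as the main line and relegate the length computation to a remark.
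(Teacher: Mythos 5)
Your proof is correct, but your main line is genuinely different from the paper's. The paper embeds $\alpha$ in a commutative diagram comparing the exact row $0 \to \lorth{X} \to M \to M/\lorth{X} \to 0$ with the row $0 \to (N/X)^* \to N^* \to X^* \to 0$ (exact because $A_A$ is injective); since $\alpha$ is an isomorphism by Lemma~\ref{ndeg}, the induced maps $\alpha'$ and $\alpha''$ are isomorphisms as well, giving $M/\lorth{X} \cong X^*$ and $\lorth{X} \cong (N/X)^*$, and then a length count,
\[
\lt{\rorth{(\lorth{X})}} = \lt{{}^*(M/\lorth{X})} = \lt{M/\lorth{X}} = \lt{X^*} = \lt{X},
\]
forces equality in the trivial inclusion $X \subseteq \rorth{(\lorth{X})}$. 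Your separation argument instead works elementwise: for $n \notin X$ the cogenerator property of $A_A$ produces $g \in N^*$ with $g|_X = 0$ and $g(n) \neq 0$, and surjectivity of $\alpha$ realizes $g = \alpha(m)$ with $m \in \lorth{X}$ and $\bilin{m,n} = g(n) \neq 0$; this is valid, shorter, avoids all length bookkeeping, and cleanly isolates the only two Quasi-Frobenius inputs actually needed. What the paper's route buys in exchange is a collection of byproducts: the isomorphisms $M/\lorth{X} \cong X^*$, $\lorth{X} \cong (N/X)^*$ (and their left-hand analogues), together with the identity $\alpha(\lorth{X}) = \{\varphi \in N^* : \varphi(X) = 0 \}$, are recorded in Remark~\ref{dualorto} and reused later in the paper, notably in Theorem~\ref{cardinales} and in the discussion of the MacWilliams identities; your argument proves the double-annihilator property but not these isomorphisms directly. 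Your fallback length computation is, in substance, exactly the paper's proof, so in effect you have both; note only that the identification $\lorth{X} \cong \ker(N^* \to X^*)$ in that sketch already requires the surjectivity of $\alpha$, the same input as your main line, and not merely its injectivity.
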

\begin{proof}
From Lemma \ref{lmln}, $\lt{{}_AM} = \lt{N_A}$. 
Consider the commutative diagram of left $A$--modules with exact rows
\[
\xymatrix{0 \ar[r] & (N/X)^* \ar[r] & N^* \ar[r] & X^* \ar[r] &  0 \\
0 \ar[r]  & \lorth{X} \ar[r] \ar^{\alpha''}[u] & M \ar[r] \ar^{\alpha}[u] & M/\lorth{X} \ar[r] \ar^{\alpha'}[u] & 0},
\]
where $\alpha'(m + \lorth{X})(n) = \bilin{m,n}$ for all $m \in M, n \in X$, and $\alpha''(m)(n + X) = \bilin{m,n}$ for all $m \in \lorth{X}, n \in N$.  We see that $\alpha'$ is injective, so,  $\alpha$ being an isomorphism by Lemma \ref{ndeg}, we get that $\alpha'$ is an isomorphism, too.  This implies that $\alpha''$  is an isomorphism.  Therefore, $\lorth{X}\cong (N/X)^*$ and $X^* \cong M/\lorth{X}$ as $A$--modules. Analogously, we have isomorphisms of $A$--modules $\rorth{Y} \cong {}^*(M/Y)$ and ${}^*Y \cong N/\rorth{Y}$. 

Now, observe that $X \subseteq \rorth{(\lorth{X})}$. Thus, from the isomorphisms $\rorth{(\lorth{X})} \cong {}^*(M/\lorth{X})$ and $X^* \cong M/\lorth{X}$, we get
\[
\lt{\rorth{(\lorth{X})}} = \lt{{}^*(M/\lorth{X})}  = \lt{M/\lorth{X}} = \lt{X^*} = \lt{X}, 
\]
which is only possible if $X = \rorth{(\lorth{X})}$. Analogously, $\lorth{(\rorth{Y})} = Y$. 
\end{proof}

\begin{remark}\label{dualorto}
With the notation of Proposition \ref{dobledual}, we get from its proof that there are isomorphisms of $A$--modules $M/\lorth{X} \cong X^*$ and $N/\rorth{Y} \cong {}^*Y$,  as well as $\lorth{X} \cong (N/X)^*$ and $\rorth{Y} \cong {}^*(M/Y)$. Moreover,
\begin{equation}\label{N*:X}
\alpha(\lorth{X}) = \{\varphi \in N^* : \varphi(X) = 0 \}.
\end{equation}
\end{remark}

\begin{corollary}\label{antireticulo}
The map $\lorth{(-)}$ gives an anti-isomorphism, with inverse $\rorth{(-)}$, between the lattices of right $A$--submodules of $N$ and of left $A$--submodules of $M$. 
\end{corollary}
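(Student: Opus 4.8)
The plan is to derive the statement as a formal consequence of Proposition \ref{dobledual}. Write $\mathcal{L}_N$ for the lattice of right $A$--submodules of $N$ and $\mathcal{L}_M$ for the lattice of left $A$--submodules of $M$, each ordered by inclusion, with meet given by intersection and join by sum. Since $\lorth{S}$ is a left $A$--submodule of $M$ for every subset $S \subseteq N$, and $\rorth{T}$ is a right $A$--submodule of $N$ for every $T \subseteq M$, restriction of the orthogonality operators yields well defined maps $\lorth{(-)} : \mathcal{L}_N \to \mathcal{L}_M$ and $\rorth{(-)} : \mathcal{L}_M \to \mathcal{L}_N$.

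First I would record that both maps are inclusion-reversing: if $X_1 \subseteq X_2$ in $\mathcal{L}_N$, then any $m \in M$ with $\bilin{m,s} = 0$ for all $s \in X_2$ in particular annihilates $X_1$, so $\lorth{X_2} \subseteq \lorth{X_1}$, and symmetrically for $\rorth{(-)}$. Next, Proposition \ref{dobledual} supplies the identities $\rorth{(\lorth{X})} = X$ for every $X \in \mathcal{L}_N$ and $\lorth{(\rorth{Y})} = Y$ for every $Y \in \mathcal{L}_M$, which say precisely that $\lorth{(-)}$ and $\rorth{(-)}$ are mutually inverse bijections. Hence $\lorth{(-)}$ is an order-reversing bijection whose inverse $\rorth{(-)}$ is also order-reversing.

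It then remains to upgrade this poset anti-isomorphism to a lattice anti-isomorphism, that is, to verify that joins are sent to meets and meets to joins. This is the standard fact that an order anti-isomorphism between two lattices automatically interchanges the lattice operations, since meets and joins are determined purely by the order. Alternatively one can argue concretely: the identity $\lorth{(X_1 + X_2)} = \lorth{X_1} \cap \lorth{X_2}$ is immediate from bilinearity, and applying $\rorth{(-)}$ together with $\rorth{(\lorth{X_i})} = X_i$ then yields $\lorth{(X_1 \cap X_2)} = \lorth{X_1} + \lorth{X_2}$; the symmetric identities for $\rorth{(-)}$ follow in the same way. I do not anticipate a genuine obstacle here, as all the substance is already packaged in Proposition \ref{dobledual}: what is left is the elementary order-reversal of the orthogonality operators and the purely formal passage from an order anti-isomorphism to a lattice anti-isomorphism.
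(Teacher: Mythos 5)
Your proof is correct and follows the same route the paper intends: the paper states this corollary without proof as an immediate consequence of Proposition \ref{dobledual}, and your argument simply spells out the standard details (order-reversal of the orthogonality operators, mutual inverses from the double-annihilator identities, and the formal fact that an order anti-isomorphism between lattices interchanges meets and joins).
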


If $\bilin{-,-}$ is associative, then we easily get from Proposition \ref{dobledual} the following refinement of Corollary \ref{antireticulo}. 

\begin{corollary}\label{biantireticulo}
If $\bilin{-,-}$ is associative, then the map $\lorth{(-)}$ gives an anti-isomorphism, with inverse $\rorth{(-)}$, between the lattices of $B-A$--subbimodules of $N$ and of $A-B$--subbimodules of $M$. 
\end{corollary}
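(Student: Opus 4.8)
The plan is to leverage Corollary~\ref{antireticulo}, which already establishes that $\lorth{(-)}$ and $\rorth{(-)}$ are mutually inverse lattice anti-isomorphisms at the level of one-sided submodules; the only additional content of the associative case is that these maps carry subbimodules to subbimodules. Since a $B-A$--subbimodule of $N$ is in particular a right $A$--submodule, and an $A-B$--subbimodule of $M$ is in particular a left $A$--submodule, it suffices to check that $\lorth{(-)}$ sends the left $B$--stable submodules to the right $B$--stable ones, and that $\rorth{(-)}$ does the reverse.

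The key computation uses associativity directly. First I would take a $B-A$--subbimodule $X$ of $N$ and verify that $\lorth{X}$, which we already know to be a left $A$--submodule of $M$, is closed under the right $B$--action. Given $m \in \lorth{X}$ and $b \in B$, for every $x \in X$ associativity gives $\bilin{mb,x} = \bilin{m,bx}$; since $X$ is closed under the left $B$--action we have $bx \in X$, whence $\bilin{m,bx} = 0$. Thus $mb \in \lorth{X}$, and $\lorth{X}$ is an $A-B$--subbimodule of $M$. The symmetric computation, using $\bilin{t,bn} = \bilin{tb,n}$, shows that if $Y$ is an $A-B$--subbimodule of $M$ then $\rorth{Y}$ is closed under the left $B$--action, hence is a $B-A$--subbimodule of $N$.

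With both maps shown to respect the subbimodule structure, I would conclude by restriction: by Proposition~\ref{dobledual} we have $\rorth{(\lorth{X})} = X$ and $\lorth{(\rorth{Y})} = Y$ for all the submodules in play, so the restrictions of $\lorth{(-)}$ and $\rorth{(-)}$ to subbimodules remain mutually inverse bijections. Surjectivity onto subbimodules is automatic, since each subbimodule $Y$ equals $\lorth{(\rorth{Y})}$ with $\rorth{Y}$ itself a subbimodule. Being restrictions of order-reversing maps, they are again anti-isomorphisms between the respective lattices.

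I do not anticipate a genuine obstacle here: the whole statement reduces to the one-line associativity identity together with the already-proven double-orthogonal equalities of Proposition~\ref{dobledual}. The only point deserving care is the bookkeeping of sides, namely making sure that left $B$--stability of $X$ is exactly what produces right $B$--stability of $\lorth{X}$ (and symmetrically), which is precisely the feature that associativity of $\bilin{-,-}$ supplies.
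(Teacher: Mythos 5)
Your proof is correct and is exactly what the paper intends: the paper leaves this corollary as an immediate consequence of Proposition~\ref{dobledual}, and your argument supplies precisely the missing details (associativity forces $B$--stability of the orthogonals, and the double-orthogonal equalities make the restricted maps mutually inverse). Nothing to add.
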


\begin{remark}
As already observed before, the statement of Proposition \ref{dobledual} can be deduced from \cite[Theorem 30.1]{Anderson/Fuller:1992}. Indeed, the key ingredient to get a double annihilator property with respect to a non degenerate bilinear form with values in a bimodule $U$ is that the duals with respect to $U$ take simples to simples (see \cite[Theorem 30.1]{Anderson/Fuller:1992} for the precise statement). This is the case for several formulations of the double annihilator property in the realm of linear codes with finite (bi)modules as alphabets, like \cite{Greferath/alt:2004}, \cite{Gluesing/Pllaha:2018}, \cite{Wood:1999}.
\end{remark}

\section{Non projective Frobenius algebras}\label{sec:Frobeniusean}

A left and right artinian ring $A$ with Jacobson radical $J$ is \emph{Frobenius} if there are isomorphisms of modules
\begin{equation}\label{AnilloFrobenius}
Soc({}_AA) \cong A/J, \qquad Soc(A_A) \cong A/J, 
\end{equation}
where the notation $Soc(X)$ stands for the socle of a (left or right) module $X$. Every Frobenius ring is QF \cite[Corollary 13.4.3]{Kasch:1982}.  Moreover, in order to prove that a given QF ring $A$ is Frobenius one only needs to check one of the isomorphisms in \eqref{AnilloFrobenius}. Every commutative QF ring is Frobenius \cite[p. 361]{Kasch:1982}.

Let $R$ be an algebra over a commutative Frobenius ring $K$. Setting $A = K$ and $B = R$ in the framework of Section \ref{Anuladores}, we may thus consider the bimodules  $N= {}_RR_K$ and $M= {}_KR_R$. Therefore, $R^* = {}^*R = \hom_K(R,K)$ is endowed with the right and left $R$--module structures
\[
(fb)(b') = f(bb') = (b'f)(b), \qquad b,b' \in R, f \in R^*.
\]
This gives, indeed, an $R-R$--bimodule structure on $R^*$. Next, we materialize the discussion of Section \ref{Anuladores} to this framework.
Recall that a $K$--bilinear form $\bilin{-,-} : R \times R \to K$ is associative if $\bilin{bb',b''} = \bilin{b,b'b''}$ for all $b, b', b'' \in R$. 

\begin{proposition}\label{Frobeniusean}
Let $R$ be an algebra over a commutative Frobenius ring $K$. Assume $R$ to be finitely generated as a $K$--module. The following structures related to $R$ are in bijective correspondence.
\begin{enumerate}
\item\label{uno} Associative non degenerate $K$--bilinear forms 
\[ \bilin{-,-} : R \times R \to K.
\]
\item\label{dos} Isomorphisms of right $R$--modules  $\alpha : R \to R^*$.
\item\label{tres} Isomorphisms of left $R$--modules $\beta : R \to R^*$. 
\item\label{cuatro} $K$--linear forms $\epsilon \in R^*$ such that $\epsilon R = R^*$. 
\item\label{cinco} $K$--linear forms $\epsilon \in R^*$ such that $R \epsilon = R^*$.
\end{enumerate}
\end{proposition}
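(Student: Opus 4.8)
The plan is to specialize the machinery of Section~\ref{Anuladores} to $A=K$ and $B=R$, with $M={}_KR_R$ and $N={}_RR_K$, so that $N^*={}^*M=R^*=\hom_K(R,K)$ carries the $R$--$R$--bimodule structure recalled just above. Under this dictionary the two correspondences from the start of Section~\ref{Anuladores} say that a $K$--bilinear form $\bilin{-,-}:R\times R\to K$ is the same datum as a $K$--linear map $\alpha:R\to R^*$ via $\bilin{m,n}=\alpha(m)(n)$, and as a $K$--linear map $\beta:R\to R^*$ via $\bilin{m,n}=\beta(n)(m)$, where moreover associativity of the form is equivalent to $\alpha$ being right $R$--linear, respectively to $\beta$ being left $R$--linear. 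Before anything else I would record two length facts. Since $K$ is central in $R$, the left module ${}_KR$ and the right module $R_K$ coincide, so $\lt{{}_KR}=\lt{R_K}$, and these are finite because $R$ is finitely generated over the artinian ring $K$. Since $K$ is commutative Frobenius, hence QF, the $K$--dual preserves length, so that $\lt{{}_KR^*}=\lt{R_K}=\lt{{}_KR}$ by \cite[Theorem 13.3.2]{Kasch:1982}.

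With this in place, I would first prove $(\ref{uno})\Leftrightarrow(\ref{dos})\Leftrightarrow(\ref{tres})$. As $\lt{{}_KR}=\lt{R_K}$, Lemma~\ref{ndeg} applied with the QF ring $A=K$ tells us that the form is non degenerate precisely when $\alpha$ is injective, and then $\alpha$ is an isomorphism; the same holds for $\beta$. Feeding this into the associativity part of the dictionary, the associative non degenerate forms of (\ref{uno}) match exactly the right $R$--module isomorphisms $\alpha$ of (\ref{dos}) and the left $R$--module isomorphisms $\beta$ of (\ref{tres}). One small point to check here is that a right (or left) $R$--linear map $R\to R^*$ is automatically $K$--linear, because the elements of $R^*$ are themselves $K$--linear, so the right $R$--action of a central scalar agrees with the $K$--action; this guarantees that the form reconstructed from an isomorphism is genuinely $K$--bilinear.

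Next I would treat $(\ref{dos})\Leftrightarrow(\ref{cuatro})$ and, symmetrically, $(\ref{tres})\Leftrightarrow(\ref{cinco})$. Given an isomorphism $\alpha$ as in (\ref{dos}), set $\epsilon=\alpha(1)$; right $R$--linearity yields $\alpha(b)=\alpha(1)b=\epsilon b$, hence $\epsilon R=\alpha(R)=R^*$ by surjectivity. Conversely, from $\epsilon$ with $\epsilon R=R^*$ I would form the map $b\mapsto\epsilon b$, which is right $R$--linear and surjective onto $R^*$. The equivalence $(\ref{tres})\Leftrightarrow(\ref{cinco})$ is the mirror statement, built from the left structure $(bf)(b')=f(b'b)$, the scalar $\epsilon=\beta(1)$, and the map $b\mapsto b\epsilon$.

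The only step that is not mere bookkeeping of the bimodule structures is the passage from a surjection to an isomorphism in $(\ref{cuatro})\Rightarrow(\ref{dos})$ and $(\ref{cinco})\Rightarrow(\ref{tres})$, and this is exactly where the hypotheses are essential: finiteness of $R$ over $K$ makes the relevant $K$--lengths finite, while the Frobenius property of $K$ delivers the identity $\lt{{}_KR^*}=\lt{{}_KR}$. Because the map $b\mapsto\epsilon b$ is also $K$--linear between $K$--modules of equal finite length, surjectivity forces injectivity, so it is an isomorphism of right $R$--modules, completing that implication. Everything else in the argument reduces to the two correspondences of Section~\ref{Anuladores} and to Lemma~\ref{ndeg}.
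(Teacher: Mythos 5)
Your proposal is correct and follows essentially the same route as the paper: the equivalence of (\ref{uno}), (\ref{dos}), (\ref{tres}) via the dictionary of Section~\ref{Anuladores} together with Lemma~\ref{ndeg}, and the equivalences $(\ref{dos})\Leftrightarrow(\ref{cuatro})$, $(\ref{tres})\Leftrightarrow(\ref{cinco})$ by evaluating at $1$ and using the length equality $\lt{R_K}=\lt{R^{*}_K}$ (from $K$ being Frobenius) to upgrade the surjection $b\mapsto\epsilon b$ to an isomorphism. The only point the paper adds beyond your argument is the explicit computation $\epsilon(bb')=(\epsilon b)(b')=(b'\epsilon)(b)$ showing that the \emph{same} functional $\epsilon$ serves in both (\ref{cuatro}) and (\ref{cinco}); this is not needed for the bare statement of bijective correspondence, but it justifies speaking later of \emph{the} Frobenius functional.
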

\begin{proof}
The equivalence between \eqref{uno}, \eqref{dos} and \eqref{tres} follows from Lemma \ref{ndeg} and the discussion previous to Definition \ref{ndegdef}. We only discuss the equivalence between \eqref{dos} and \eqref{cuatro}, since that between \eqref{tres} and \eqref{cinco} is symmetric. If there is an isomorphism of right $R$--modules $\alpha : R \to R^*$, then $\epsilon = \alpha(1)$ generates $R^*$ as a right $R$--module. Conversely, given $\epsilon \in R^*$ such that $R^* = \epsilon R$, we have the surjective homomorphism of right $R$--modules $\alpha : R \to \epsilon R = R^*$ given by $\alpha(b) = \epsilon b$. Since $K$ is Frobenius, $\lt{R_K} = \lt{R^{*}_K}$, and  we get that $\alpha$ is an isomorphism, indeed. Let us finally argue that the linear form appearing in \eqref{cuatro} coincides with that of \eqref{cinco}. Let $b, b' \in R$.  Then $\bilin{b,b'} = \alpha(b)(b') = (\epsilon b)(b') = \epsilon(bb') = (b'\epsilon)(b) = \beta(b') (b)$.  Thus, $R \epsilon = R^*$. 
\end{proof}

\begin{definition}
An algebra $R$ over a  commutative Frobenius ring $K$  is said to be \emph{non projective Frobenius} if $R$ is finitely generated as a $K$--module and  there exits a non degenerate $K$--bilinear form $\bilin{-,-} : R \times R \to K$.  The $K$--linear form $\epsilon : R \to K$ provided by Proposition \ref{Frobeniusean} will be referred to as the \emph{Frobenius functional}. 
\end{definition}

\begin{remark}
It follows from Proposition \ref{Frobeniusean} that a Frobenius $K$--algebra in the sense of \cite{Eilenberg/Nakayama:1955} is non projective Frobenius. The converse is not true (see Remark  \ref{noproy}). 
\end{remark}

\begin{remark}
We have already seen that the additional structure maps that, according to Proposition \ref{Frobeniusean}, make a $K$--algebra non projective Frobenius, are related by the following equalities
\[
\alpha(b)(b') = \bilin{b,b'} = \beta(b')(b), 
\]
\[
\epsilon = \alpha(1) = \beta(1), \quad \alpha(b) = \epsilon b,  \quad \beta(b) = b \epsilon,
\]
\[
\epsilon (bb') = \bilin{b,b'},
\]
for all $b, b' \in R$. 
\end{remark}

\begin{remark}\label{sindies}
As a consequence of Lemma \ref{ndeg} and Proposition \ref{Frobeniusean}, and from their proofs, we see that, in the latter, we may replace condition \eqref{uno} by only requiring the bilinear form to be either left or right non degenerate. Also, in conditions \eqref{dos} and \eqref{tres} we only need to require one of $\alpha$ or $\beta$ to be surjective. 
\end{remark}

\begin{example}\label{ssF}
Every semisimple algebra $R$  over a commutative Frobenius ring $K$ with $R_K$ finitely generated is a non projective Frobenius algebra. By virtue of Wedderburn-Artin Theorem, it suffices by proving this when $R$ is simple. In this case, ${}_RR \cong \Sigma^n$ for a simple left $R$--module $\Sigma$. Now,  $R^* \cong (\Sigma^*)^n$ as right $R$--modules. Since $K$ is Frobenius, $R$ and $R^*$ have the same length as $K$--modules. Therefore, $\Sigma$ and $\Sigma^*$ have the same length as $K$--modules. This implies  (look at $R$ as a matrix ring over a division $K$--algebra) that $\Sigma^*$ is a simple right $R$--module. Since its multiplicity in $R^*$ is $n$, we deduce that $R \cong R^*$ as right $R$--modules. 
\end{example}

\section{Annihilators in non projective Frobenius extensions}\label{FrobeniuseanAnn}
Let us come back to the situation of a pair of bimodules ${}_AM_B$ and ${}_BN_A$ with a non degenerate $A$--bilinear map $\bilin{-,-} : M \times N \to A$. Assume, in addition, that $A$ is a non projective Frobenius algebra over a Frobenius commutative ring $K$, with Frobenius functional $\epsilon : A \to K$. The corresponding associative non degenerate $K$--bilinear form (see Proposition \ref{Frobeniusean}) will be denoted by $\bilin[\epsilon]{-,-}$, and it obeys the rule
\[
\bilin[\epsilon]{a,a'} = \epsilon(aa'), \quad a, a' \in A.
\]
Define, for each subset $S \subseteq N$, 
\[
{}^\epsilon S = \{ m \in M : \epsilon (\bilin{m,s}) = 0 \; \forall s \in S \},
\]
which is a $K$--submodule of $M$. Analogously, for every subset $T \subseteq M$ we get the $K$--submodule of $N$
\[
T^\epsilon  = \{ n \in N: \epsilon (\bilin{t,n}) = 0 \; \forall t \in T \}.
\]
Indeed, ${}^\epsilon S$ and $T^\epsilon$ are the left and right orthogonal $K$--submodules of $S$ and $T$ with respect to the $K$--bilinear form
\[
[-,-]_{\epsilon} : M \times N \to K, \qquad [m,n]_{\epsilon} = \epsilon(\bilin{m,n}).  
\]
Obviously, $\lorth{S} \subseteq {}^\epsilon S$ and $\rorth{T} \subseteq T^\epsilon$. 

\begin{proposition}\label{ortogonales}
Let $S$ and $T$ be $A$--submodules of $N_A$ and ${}_AM$, respectively.  Then
\[
\lorth{S} = {}^\epsilon S, \qquad \rorth{T} = T^\epsilon.
\]
Thus, ${}^\epsilon S$ and $T^\epsilon$ are $A$--submodules of, respectively, ${}_AM$ and $N_A$. Moreover, 
\[
\rorth{(\lorth{S})} = S = ({}^\epsilon S)^\epsilon, \quad \lorth{(\rorth{T})} = T = {}^\epsilon(T^\epsilon). 
\]
\end{proposition}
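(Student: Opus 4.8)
The plan is to establish the two identities $\lorth{S} = {}^\epsilon S$ and $\rorth{T} = T^\epsilon$ first, and then to deduce everything else by combining them with the double annihilator property already available in Proposition \ref{dobledual}. Since the inclusions $\lorth{S} \subseteq {}^\epsilon S$ and $\rorth{T} \subseteq T^\epsilon$ have been observed, only the reverse inclusions require work, and this is precisely where the hypothesis that $S$ and $T$ are $A$--submodules, rather than arbitrary subsets, enters.

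To prove ${}^\epsilon S \subseteq \lorth{S}$ I would take $m \in {}^\epsilon S$, fix $s \in S$, and set $x = \bilin{m,s} \in A$. Because $S$ is a right $A$--submodule of $N_A$, we have $sa \in S$ for every $a \in A$, and the right $A$--linearity of the form in its second argument gives $\bilin{m,sa} = \bilin{m,s}a = xa$. The condition $m \in {}^\epsilon S$ then reads $\epsilon(xa) = 0$ for all $a \in A$, that is, $\bilin[\epsilon]{x,a} = 0$ for all $a$. Since the form $\bilin[\epsilon]{-,-}$ attached to the Frobenius functional is non degenerate on $A \times A$ (Proposition \ref{Frobeniusean}), its right non degeneracy forces $x = \bilin{m,s} = 0$; as $s$ was arbitrary, $m \in \lorth{S}$. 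The identity $T^\epsilon \subseteq \rorth{T}$ is the mirror computation: for $n \in T^\epsilon$ and $t \in T$ one uses $at \in T$ together with the left $A$--linearity $\bilin{at,n} = a\bilin{t,n}$ to reduce to $\epsilon\bigl(a\,\bilin{t,n}\bigr) = 0$ for all $a$, whence left non degeneracy of $\bilin[\epsilon]{-,-}$ yields $\bilin{t,n} = 0$.

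With the two identities in hand, the assertion that ${}^\epsilon S = \lorth{S}$ is a left $A$--submodule of ${}_AM$ and $T^\epsilon = \rorth{T}$ a right $A$--submodule of $N_A$ is immediate, since $\lorth{S}$ and $\rorth{T}$ are $A$--submodules by construction. For the double annihilator equalities I would feed the identities into Proposition \ref{dobledual}: that result gives $\rorth{(\lorth{S})} = S$, and then, viewing $\lorth{S} = {}^\epsilon S$ as a left $A$--submodule and applying the identity $T^\epsilon = \rorth{T}$ to it, one obtains $({}^\epsilon S)^\epsilon = (\lorth{S})^\epsilon = \rorth{(\lorth{S})} = S$. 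The chain $\lorth{(\rorth{T})} = T = {}^\epsilon(T^\epsilon)$ follows symmetrically.

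The only genuinely non-trivial point, and hence the main obstacle, is the reverse inclusion; it hinges on recognizing that $\epsilon(\bilin{m,s}a) = 0$ for all $a$ is exactly the statement that $\bilin{m,s}$ lies in the left kernel of the non degenerate pairing $\bilin[\epsilon]{-,-}$ on $A$, after which everything is bookkeeping. I would emphasize that the passage from the weaker $\epsilon$--vanishing to the stronger $A$--valued vanishing fails for arbitrary subsets and genuinely requires the stability of $S$ and $T$ under the $A$--action.
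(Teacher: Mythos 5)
Your proof of the two identities $\lorth{S} = {}^\epsilon S$ and $\rorth{T} = T^\epsilon$ is correct and is exactly the paper's argument (multiply the second entry by an arbitrary $a \in A$, pull it out by bilinearity, and use non degeneracy of $\bilin[\epsilon]{-,-}$ from Proposition \ref{Frobeniusean}). The gap is in how you obtain the double annihilator equalities. You quote Proposition \ref{dobledual} for the $A$--valued form $\bilin{-,-} : M \times N \to A$ to conclude $\rorth{(\lorth{S})} = S$. But Proposition \ref{dobledual} is proved under the standing assumption of Section \ref{Anuladores} that the ring in which the form takes its values is Quasi-Frobenius. For your application that ring is $A$ itself, and at this stage $A$ is only known to be a non projective Frobenius algebra over $K$; the paper establishes that such an algebra is a QF ring only \emph{after} this proposition, as a consequence of Corollary \ref{anuladores} together with \cite[Theorem 13.2.1]{Kasch:1982} --- and Corollary \ref{anuladores} is itself deduced from Proposition \ref{ortogonales}. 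So, as written, your argument either runs in a circle or rests on an unverified hypothesis; the QF property of $A$ is one of the conclusions this chain of results is designed to produce, not an ambient assumption.

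This is precisely why the paper routes the second half through $K$ instead of $A$: it first checks that the $K$--valued form $[-,-]_\epsilon$ is non degenerate --- this follows from the identities you already proved, applied with $S = N$ and $T = M$, since $[m,n]_\epsilon = 0$ for all $n \in N$ gives $m \in {}^\epsilon N = \lorth{N} = 0$ by non degeneracy of $\bilin{-,-}$, and symmetrically on the other side --- and then applies Proposition \ref{dobledual} to $[-,-]_\epsilon$, which is legitimate because its value ring $K$ is commutative Frobenius, hence QF. This yields $({}^\epsilon S)^\epsilon = S$ and ${}^\epsilon(T^\epsilon) = T$, and the $A$--valued equalities then follow from your identities, e.g. $\rorth{(\lorth{S})} = (\lorth{S})^\epsilon = ({}^\epsilon S)^\epsilon = S$; note that your own derivation runs this chain in the opposite direction. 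Your route can be repaired, but only by first proving independently that $A$ is QF --- for instance, $A^{*} = \hom_K(A,K)$ is an injective cogenerator as a right $A$--module because $K$ is QF, and $A_A \cong A^{*}_A$ by Proposition \ref{Frobeniusean}, so $A_A$ is an injective cogenerator --- and some such argument must be supplied before Proposition \ref{dobledual} may be quoted for $A$--valued forms.
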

\begin{proof}
To prove the equality ${}^\epsilon S = \lorth{S}$, we just need to check the inclusion ${}^\epsilon S \subseteq \lorth{S}$.
Let $a \in A, m \in M, s \in S$. If $m \in {}^{\epsilon}S$, then, since $S$ is a submodule of $N_A$, we have
\[
0 = \epsilon(\bilin{m,sa}) = \epsilon (\bilin{m,s}a) = \bilin[\epsilon]{\bilin{m,s},a}.
\]
Now $a \in A $ is arbitrary and $\bilin[\epsilon]{-,-}$ is non degenerate, we thus get that $\bilin{m,s} = 0$. Hence, $m \in \lorth{S}$. For the proof of the second part, observe that the bilinear form $[-,-]_\epsilon$  is non degenerate: if $m \in M$ is such that $[m,n]_\epsilon = 0$ for all $n \in N$, then $m \in {}^\epsilon N = \lorth{N} = \{ 0 \}$. Hence, $[-,-]_\epsilon$ is left non degenerate. The argument on the right is similar. By Proposition \ref{dobledual}, $({}^\epsilon S)^\epsilon = S$ and ${}^{\epsilon}(T^\epsilon) = T$. 
\end{proof}

For a subset $S \subseteq A$, we have the left (resp. right) ideal of $A$ defined by
\[
\lann{A}{S} = \{ a \in A : as = 0 \; \forall s \in S \}
\]
(resp. by)
\[
\rann{A}{S} = \{ a \in A : sa = 0 \; \forall s \in S \}.
\]

\begin{corollary}\label{anuladores}
For any non projective Frobenius algebra $A$ with Frobenius functional $\epsilon : A \to K$, every right ideal $S$ of $A$, and every left ideal $T$ of $A$, we have
\[
\lann{A}{S} = {}^\epsilon S, \qquad \rann{A}{T} = T^\epsilon.
\]
Therefore,
\[
\rann{A}{\lann{A}{S}} = S, \qquad \lann{A}{\rann{A}{T}} = T.
\]
\end{corollary}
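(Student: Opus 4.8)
The plan is to obtain this corollary as the specialization of Proposition \ref{ortogonales} to the regular bimodule. Concretely, I would set $B = A$ and take $M = N = {}_AA_A$, the ring $A$ regarded as a bimodule over itself, together with the $A$--bilinear map
\[
\bilin{-,-} : A \times A \to A, \qquad \bilin{m,n} = mn
\]
given by the multiplication of $A$. With this choice the auxiliary $K$--bilinear form $[m,n]_\epsilon = \epsilon(\bilin{m,n}) = \epsilon(mn)$ is precisely $\bilin[\epsilon]{m,n}$, so the operations ${}^\epsilon(-)$ and $(-)^\epsilon$ attached to $\bilin{-,-}$ coincide with those attached to the Frobenius functional.

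First I would verify that the multiplication map meets the hypotheses of Proposition \ref{ortogonales}. That $\bilin{-,-}$ is $A$--bilinear and associative is immediate from the distributivity and associativity of the product: $\bilin{am,n} = a\bilin{m,n}$, $\bilin{m,na} = \bilin{m,n}a$, and $\bilin{mb,n} = (mb)n = m(bn) = \bilin{m,bn}$. Non degeneracy is nothing but the canonical identification $A \cong \operatorname{End}(A_A)$: the map $\alpha : A \to \hom_A(A_A,A_A)$ sending $m$ to left multiplication by $m$ is injective since $\alpha(m)(1) = m$, and the left-hand side is handled symmetrically through $\beta : A \to \hom_A({}_AA,{}_AA)$, right multiplication. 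Hence $\bilin{-,-}$ is non degenerate and Proposition \ref{ortogonales} applies to this data.

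Next I would translate the orthogonal complements of Proposition \ref{ortogonales} into the annihilators of the corollary. For a right ideal $S$ of $A$ (that is, an $A$--submodule of $N_A$) one reads off directly from the definitions
\[
\lorth{S} = \{ m \in A : ms = 0 \;\forall s \in S \} = \lann{A}{S},
\]
while ${}^\epsilon S = \{ m \in A : \epsilon(ms) = 0 \;\forall s \in S \}$. The first part of Proposition \ref{ortogonales} gives $\lorth{S} = {}^\epsilon S$, whence $\lann{A}{S} = {}^\epsilon S$; the equality $\rann{A}{T} = T^\epsilon$ for a left ideal $T$ follows from the symmetric identification $\rorth{T} = \rann{A}{T}$.

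Finally, the double-annihilator identities are obtained from the ``moreover'' part of Proposition \ref{ortogonales}. Rewriting $\rorth{(\lorth{S})}$ as $\rann{A}{\lann{A}{S}}$ and $\lorth{(\rorth{T})}$ as $\lann{A}{\rann{A}{T}}$, the equalities $\rorth{(\lorth{S})} = S$ and $\lorth{(\rorth{T})} = T$ become exactly $\rann{A}{\lann{A}{S}} = S$ and $\lann{A}{\rann{A}{T}} = T$. I expect no genuine obstacle here: the only point deserving attention is checking that the multiplication form is non degenerate so that Proposition \ref{ortogonales} can be invoked, and this collapses to the standard isomorphism $A \cong \operatorname{End}(A_A)$. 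Everything else is a transcription of the orthogonality notation into annihilator notation.
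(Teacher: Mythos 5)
Your proposal is correct and is essentially the paper's own proof: the paper likewise obtains the corollary by setting $M = N = A$ in Proposition \ref{ortogonales} with the multiplication map as the bilinear form and identifying $\lorth{S} = \lann{A}{S}$, $\rorth{T} = \rann{A}{T}$. Your explicit check that the multiplication form is non degenerate (via evaluation at $1$) is a detail the paper leaves implicit, but it changes nothing in the argument.
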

\begin{proof}
Setting $M = N = A$ in Proposition \ref{ortogonales}, and the bilinear form $\bilin{-,-}$ to be the multiplication map of $A$, we get the statement, since $\lorth{S} = \lann{A}{S}$, while $\rorth{T} = \rann{A}{T}$. 
\end{proof}

\begin{remark}
It follows from Corollary \ref{anuladores} and \cite[Theorem 13.2.1]{Kasch:1982} that every non projective Frobenius algebra over a Frobenius commutative ring is a Quasi-Frobenius ring. Next section is devoted to sharpen this result. 
\end{remark}

\section{Non projective Frobenius algebras and Frobenius rings}\label{FF}

Let $R$ an algebra over a commutative Frobenius ring $K$ with $R_K$ finitely generated. Let $J$ denote the Jacobson radical of $R$. Since $R$ is an artinian ring, the socle $Soc(X)$ of every $R$--module $X$  is an \emph{essential submodule},  in the sense that every nonzero submodule of $X$ intersects non trivially $Soc(X)$. It follows from \cite[Theorem 18.10 and Proposition 18.12]{Anderson/Fuller:1992} that two injective $R$--modules are isomorphic if, and only if, they have isomorphic socles.

It is well known that, if $X_R$ is finitely generated, then 
\[
\Soc(X_R) = \{ x \in X : xJ = 0\}.
\]
 Therefore, $\Soc(R^{*}_R)$ may be  computed as
\begin{equation}\label{socjac}
\Soc(R^{*}_R) = \{ f \in R^*: f(J) = 0 \} \cong (R/J)^{*}_R \cong (R/J)_R,
\end{equation}
where the last isomorphism of $R/J$--modules  (and, hence, of $R$--modules) holds because $R/J$ is semisimple, and Example \ref{ssF} applies.

\begin{theorem}\label{FrobeniuseanFrobenius}
Let $R$ be an algebra over a commutative Frobenius ring $K$ with $R_K$ finitely generated. Then $R$ is a non projective Frobenius $K$--algebra if and only if $R$ is a Frobenius ring.
\end{theorem}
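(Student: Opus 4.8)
The plan is to prove the two implications separately, leaning on the machinery already assembled. For the forward direction, suppose $R$ is a non projective Frobenius $K$-algebra. By the Remark following Corollary \ref{anuladores}, $R$ is already known to be Quasi-Frobenius, so by the observations opening Section \ref{FF} it suffices to verify just one of the socle isomorphisms in \eqref{AnilloFrobenius}, say $\Soc(R_R) \cong (R/J)_R$. The key is that being non projective Frobenius gives, via Proposition \ref{Frobeniusean}, an isomorphism of right $R$-modules $\alpha : R \to R^*$. Applying this isomorphism to the computation in \eqref{socjac}, which identifies $\Soc(R^*_R) \cong (R/J)_R$, I would transport the socle: since $\alpha$ is an $R$-module isomorphism it carries $\Soc(R_R)$ onto $\Soc(R^*_R)$, whence $\Soc(R_R) \cong \Soc(R^*_R) \cong (R/J)_R$. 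That establishes the required isomorphism, and since a QF ring satisfying one of the conditions in \eqref{AnilloFrobenius} is Frobenius, $R$ is Frobenius.

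For the converse, suppose $R$ is a Frobenius ring. Then by \eqref{AnilloFrobenius} we have $\Soc(R_R) \cong (R/J)_R$, and by \eqref{socjac} we also have $\Soc(R^*_R) \cong (R/J)_R$, so $\Soc(R_R) \cong \Soc(R^*_R)$ as right $R$-modules. The plan is to upgrade this isomorphism of socles to an isomorphism $R \cong R^*$ of right $R$-modules. Here I would invoke the fact recalled at the start of Section \ref{FF}: over the artinian ring $R$, two injective modules with isomorphic socles are themselves isomorphic. So I need $R_R$ and $R^*_R$ both to be injective. A Frobenius ring is QF, and over a QF ring $R_R$ is injective; the module $R^* = \hom_K(R,K)$ is injective because $K$ is a Frobenius (hence self-injective) commutative ring and dualizing into an injective cogenerator produces an injective $R$-module. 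Granting injectivity of both, the isomorphic-socles criterion yields an isomorphism $\alpha : R \to R^*$ of right $R$-modules, and condition \eqref{dos} of Proposition \ref{Frobeniusean} then certifies that $R$ is non projective Frobenius.

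The main obstacle I anticipate is the converse direction, specifically justifying that $R^*_R$ is an injective right $R$-module and that the socle isomorphism genuinely lifts. The injectivity of $R^*$ rests on an adjunction/duality argument: $\hom_K(-,K)$ sends injective (equivalently, projective, since $K$ is QF) $K$-modules to injective $R$-modules, and one must be careful that the relevant functor $\hom_K(R, -)$ is exact, which uses that $K$ is self-injective. I would want to state this cleanly, perhaps citing the standard duality for the commutative Frobenius base ring, rather than reprove it. The forward direction is comparatively routine once one notices that $\alpha$ being an $R$-module isomorphism automatically matches socles, so the real content and the delicate point both live in ensuring the hypotheses of the injective-module uniqueness theorem are met in the non-field setting where $R$ need not be projective over $K$.
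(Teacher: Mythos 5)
Your proposal is correct and follows essentially the same route as the paper's own proof: both directions rest on Corollary \ref{anuladores} (to get Quasi-Frobenius), the socle identification \eqref{socjac}, Proposition \ref{Frobeniusean}, and the fact that injective modules over an artinian ring with isomorphic socles are isomorphic. The only differences are cosmetic — you treat the two implications in the opposite order and spell out the adjunction argument for the injectivity of $R^{*}_R$, which the paper asserts without detail.
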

\begin{proof}
Assume first that $R$ is Frobenius as a ring. Then $\Soc(R_R) \cong (R/J)_R$. By \eqref{socjac}, $(R/J)_R \cong \Soc(R^{*}_R)$. Now, $R_R$ is injective because it is Frobenius, and $R^{*}_R$ is injective because $K$ is Frobenius. Since they have isomorphic socles, it follows that $R_R \cong R^{*}_R$, and $R$ is a non projective Frobenius algebra. Conversely, Corollary \ref{anuladores} implies that $R$ is Quasi-Frobenius. Since $R$ is non projective Frobenius, we have an isomorphism of $R$--modules $R_R \cong R^{*}_R$. Therefore, taking \eqref{socjac} into account, we get
\[
\Soc(R_R) \cong \Soc(R^{*}_R) \cong (R/J)_R.
\]
Hence, $R$ is a Frobenius ring. 
\end{proof}

\begin{remark}
Theorem \ref{FrobeniuseanFrobenius} can also be deduced from \cite[Proposition 1.5]{Iovanov:2016}, since the commutative Frobenius base ring $K$ is a minimal injective cogenerator of the category of $K$--modules. 
\end{remark}

\begin{corollary}\label{Wood}
Let $R$ be a finite ring of characteristic $n$. Then $R$ is a Frobenius ring if and only if $R$ is a non projective Frobenius $\Zset_n$--algebra.
\end{corollary}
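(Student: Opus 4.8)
The plan is to reduce this statement directly to Theorem \ref{FrobeniuseanFrobenius} by taking $K = \Zset_n$. First I would verify the hypotheses of that theorem for this choice of $K$. Since $R$ has characteristic $n$, the canonical ring homomorphism $\Zset \to R$, $m \mapsto m \cdot 1_R$, has image $\Zset_n$, and this image is a central subring of $R$; this presents $R$ as a $\Zset_n$--algebra. As $R$ is finite, it is in particular finite---hence finitely generated---as a $\Zset_n$--module. It thus remains only to check that $\Zset_n$ is a commutative Frobenius ring.

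For this I would use the Chinese Remainder Theorem to write $\Zset_n \cong \prod_i \Zset_{p_i^{e_i}}$, where $n = \prod_i p_i^{e_i}$ is the prime factorization of $n$. A finite direct product of Frobenius rings is again Frobenius, because socle, Jacobson radical, and semisimple quotient all distribute over finite products, so it suffices to treat a single local factor $\Zset_{p^e}$. This is a finite (hence artinian) commutative local ring with maximal ideal $(p)$; a direct computation gives $\Soc(\Zset_{p^e}) = (p^{e-1}) \cong \Zset_{p^e}/(p)$, which is exactly the socle condition defining a Frobenius ring. Therefore $\Zset_{p^e}$, and hence $\Zset_n$, is commutative Frobenius. (Alternatively, one could note that each $\Zset_{p^e}$ is a finite chain ring, hence self-injective and quasi-Frobenius, and then invoke the fact recorded before Theorem \ref{FrobeniuseanFrobenius} that every commutative QF ring is Frobenius.)

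With all the hypotheses of Theorem \ref{FrobeniuseanFrobenius} satisfied for $K = \Zset_n$, that theorem yields at once that $R$ is a non projective Frobenius $\Zset_n$--algebra if and only if $R$ is a Frobenius ring, which is the assertion. I do not anticipate any genuine obstacle here: the substance is already contained in Theorem \ref{FrobeniuseanFrobenius}, and the only independent input---that $\Zset_n$ is Frobenius---is classical and reduces, via the Chinese Remainder Theorem, to the transparent socle computation in each local factor $\Zset_{p^e}$.
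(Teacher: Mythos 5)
Your proposal is correct and is exactly the paper's route: the paper states Corollary \ref{Wood} as an immediate consequence of Theorem \ref{FrobeniuseanFrobenius} with $K = \Zset_n$, relying on the standard facts you verify explicitly (that $R$ is a finitely generated $\Zset_n$--module and that $\Zset_n$ is commutative Frobenius). Your filling in of the hypothesis checks, including the socle computation for $\Zset_{p^e}$ via the Chinese Remainder Theorem, is sound but supplies detail the paper leaves implicit.
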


\begin{remark}\label{noproy}
A non projective Frobenius algebra need not be projective over its commutative base Frobenius ring. One of the simplest examples is the ring $R = \Zset_2 \times \Zset_4$, which is clearly Frobenius and, by Corollary \ref{Wood}, a non projective Frobenius algebra over $\Zset_4$. However, $R$ is not projective as a $\Zset_4$--module, so it is not a Frobenius algebra in the classical setting. 
\end{remark}

Next, we will describe a method for constructing Frobenius rings from skew polynomial rings with coefficients in a non projective Frobenius algebra. 

Let \(A\) be a non projective Frobenius  algebra over a Frobenius commutative ring $K$ with Frobenius functional $\epsilon : A \to K$, and associative non degenerate $K$--bilinear form
\[
\bilin[\epsilon]{-,-} : A \times A \to K, \quad \bilin[\epsilon]{a,b} = \epsilon(ab).
\]
Consider the skew polynomial ring  \(S = A[x;\sigma]\), where $\sigma$ is a $K$--algebra automorphism of $A$, and let  \(f = \sum_{i=0}^m f_i x^i \in S$ be a monic polynomial such that $Sf = fS$.  Since $Sf$ is a twosided ideal of $S$, we get the $K$--algebra  \(\mathcal{R} = S/Sf\), which is finitely generated as a $K$--module because $f$ is monic.  Indeed, every $g \in S$ can be written as $g = qf + r$, for suitable $q, r \in S$ with $r$ of degree smaller than $m$. This implies, always with $f$ monic, that ${}_A\mathcal{R} \cong A^m$. In other words, we will identify the elements of $\mathcal{R}$ with polynomials in $S$ with degree less than $m$, with the operations made modulo $f$. For $g \in \mathcal{R}$, the notation $g_0$ stands for its term of degree $0$. Finally, we assume that $f_0$ is a unit of $A$. 

\begin{theorem}\label{Frobeniusskew}
\(\mathcal{R} \) is a non projective Frobenius \(K\)-algebra with nondegenerate associative bilinear form
\[
\bilin{-,-} : \mathcal{R} \times \mathcal{R} \to K, \qquad \bilin{g,h} = \epsilon((gh)_0).
\]
Therefore, $\mathcal{R}$ is a Frobenius ring. 
\end{theorem}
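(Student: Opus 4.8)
The plan is to check that the displayed form is an associative, non-degenerate $K$-bilinear form; granting this, $\mathcal{R}$ is non projective Frobenius by definition, and the concluding assertion that $\mathcal{R}$ is a Frobenius ring is then immediate from Theorem \ref{FrobeniuseanFrobenius}, since $\mathcal{R}$ is finitely generated over the Frobenius commutative ring $K$. Here $K$-bilinearity is clear from the $K$-linearity of $\epsilon$, and associativity is automatic because multiplication in $\mathcal{R}$ is associative: $\langle gg',g''\rangle = \epsilon(((gg')g'')_0) = \epsilon((g(g'g''))_0) = \langle g,g'g''\rangle$. So the entire content is non-degeneracy, and by Remark \ref{sindies} it suffices to verify it on one side; I would prove left non-degeneracy, i.e. that $\langle g,h\rangle = 0$ for all $g$ forces $h=0$.

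First I would make the pairing explicit. Writing $g = \sum_{i=0}^{m-1} a_i x^i$ and $h = \sum_{j=0}^{m-1} b_j x^j$ and using $x^i b = \sigma^i(b)x^i$, one has $gh = \sum_{i,j} a_i\sigma^i(b_j)x^{i+j}$ in $S$. Put $\tau_\ell = (x^\ell \bmod f)_0 \in A$, the constant term of the reduction of $x^\ell$ modulo $f$. Since reduction modulo the left ideal $Sf$ is left $A$-linear, extracting constant terms yields
\[
\langle g,h\rangle = \epsilon\Big(\sum_{i=0}^{m-1} a_i\,w_i\Big), \qquad w_i = \sum_{j=0}^{m-1}\sigma^i(b_j)\,\tau_{i+j}.
\]
The elementary facts I would record are $\tau_0 = 1$, $\tau_\ell = 0$ for $1\le \ell \le m-1$ (those powers are already reduced), and $\tau_m = -f_0$, which is a \emph{unit} precisely by the hypothesis that $f_0$ is a unit.

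Next I would split the argument into two independent steps. Taking $g = ax^i$ gives $\langle ax^i,h\rangle = \epsilon(a\,w_i)$, so if $\langle -,h\rangle$ vanishes then $\epsilon(a\,w_i)=0$ for all $a\in A$ and all $i$; right non-degeneracy of the $\epsilon$-form on $A$ then forces $w_i = 0$ for every $i$. It remains to show that $h\mapsto (w_0,\dots,w_{m-1})$ is injective, i.e. that $\sum_j \sigma^i(b_j)\tau_{i+j} = 0$ for $0\le i\le m-1$ has only the trivial solution. Applying $\sigma^{-i}$ rewrites this as $\sum_j b_j\,\sigma^{-i}(\tau_{i+j}) = 0$, a linear system whose matrix $\tilde T$ with $(j,i)$-entry $\sigma^{-i}(\tau_{i+j})$ inherits the ``anti-Hankel'' pattern of the $\tau_\ell$ (automorphisms send zeros to zeros and units to units): its entries vanish when $1\le i+j\le m-1$ and are units when $i+j\in\{0,m\}$. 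Reordering the columns as $0, m-1, m-2, \dots, 1$ exhibits $\tilde T$, up to a column permutation, as lower triangular with unit diagonal entries, hence invertible over $A$ (which holds irrespective of commutativity of $A$). Therefore all $b_j = 0$, so $h = 0$, and left non-degeneracy follows.

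I expect the main obstacle to be the bookkeeping of the reduction modulo $f$: one must argue that the only contributions to $(gh)_0$ are captured by the scalars $\tau_{i+j}$, and that the $\sigma$-twist (which lands on the $b_j$, not the $a_i$) does not spoil the reduction to a matrix problem. The twist is exactly why I would organize the computation around the left factorization $\langle g,h\rangle = \epsilon(\sum_i a_i w_i)$ and then strip the twist off row by row via $\sigma^{-i}$; after that, the decisive point is simply that $f_0$ being a unit makes the relevant anti-diagonal of $\tilde T$ consist of units, which drives its invertibility and hence the non-degeneracy of $\langle -,-\rangle$.
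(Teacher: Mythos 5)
Your proof is correct, and it is in fact more careful than the paper's own argument at the one point where care is genuinely needed. Both proofs share the same skeleton: pair against monomials, reduce non-degeneracy to coefficient-wise statements, and exploit the non-degeneracy of $\bilin[\epsilon]{-,-}$ together with the hypothesis that $f_0$ is a unit (you prove left non-degeneracy, the paper right non-degeneracy; either suffices by Remark \ref{sindies}, and your explicit check of associativity, omitted in the paper's proof, is indeed immediate). The difference lies in the treatment of the cross terms of degree $i+j>m$. The paper's proof rests entirely on the closed formula \eqref{bilformula}, which in your notation asserts that $\tau_\ell = 0$ for all $m < \ell \le 2m-2$; this is false in general. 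For instance, take $A$ commutative, $\sigma = \mathrm{id}$, $f = x^3 - x^2 - 1$ (so $Sf = fS$ and $f_0 = -1$ is a unit): then
\[
x^4 \equiv x^2 + x + 1 \pmod{Sf},
\]
so $(x^2 \cdot x^2)_0 = 1$, whereas \eqref{bilformula} predicts $0$. Consequently, when the paper pairs $g$ against $h = \lambda x^i$ and concludes $g_{m-i}=0$ directly from \eqref{bilformula}, the step is fully justified only for $i=1$; for $i \ge 2$ the true pairing carries the extra terms $\sum_{j>m-i} g_j \sigma^j(\lambda)\tau_{i+j}$, and the argument needs a downward induction (the coefficients $g_{m-1},\dots,g_{m-i+1}$, already known to vanish, kill these terms) to be complete. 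Your proof is exactly this repair carried out systematically: you retain all the $\tau_\ell$, observe that the unknown entries $\sigma^{-i}(\tau_{i+j})$ with $i+j>m$ fall strictly below the diagonal after the column permutation $0, m-1, m-2, \dots, 1$, and conclude by invertibility of a triangular matrix with unit diagonal over the possibly noncommutative ring $A$; that is the clean and fully correct form of the paper's intended argument, and the theorem itself stands (in the example above the Gram matrix of the form is invertible, as your criterion shows). The only blemish is terminological: when you deduce $w_i = 0$ from $\epsilon(aw_i)=0$ for all $a\in A$, what you are using is the left (not right) non-degeneracy of $\bilin[\epsilon]{-,-}$ in the paper's conventions; this is immaterial, since that form is non-degenerate on both sides.
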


\begin{proof}
A straightforward computation modulo $f$ shows that, for $$g = \sum_{i=0}^{m-1}g_ix^i, h =  \sum_{i=0}^{m-1}h_ix^i \in \mathcal{R},$$ we have
\begin{equation}\label{bilformula}
(gh)_0 = g_0h_0 - \sum_{i=1}^{m-1}g_{m-i}\sigma^{m-i}(h_i)f_0. 
\end{equation}
Obviously, $\bilin{-,-}$ is $K$--bilinear. In order to prove that it is non degenerate it suffices, by Remark \ref{sindies}, to show that $\bilin{-,-}$ is right non degenerate. So, let $g \in \mathcal{R}$ such that $\bilin{g,h} = 0$ for all $h \in \mathcal{R}$. Let $\lambda \in A$. Taking $h = \lambda \in A$ in \eqref{bilformula}, we get that
\[
0 = \epsilon(g_0\lambda) = \bilin[\epsilon]{g_0,\lambda}.
\]
Since $\bilin[\epsilon]{-,-}$ is non degenerate, we deduce that $g_0 =0$. Now, let $i\in \{1, \dots, m\}$. Setting $h = \lambda x^i$ in \eqref{bilformula}, we have
\[
0 = -\epsilon(g_{m-i}\sigma^{m-i}(\lambda) f_0) = -\bilin[\epsilon]{g_{m-i}, \sigma^{m-i}(\lambda) f_0}.
\]
Now, $\lambda \in A$ is arbitrary, $f_0 \in A$ is a unit, and $\sigma^{m-i}$ is an automorphism of $A$, so, we get from the non degeneracy of $\bilin[\epsilon]{-,-}$ that $g_{m-i} = 0$. Thus, $g = 0$, and $\bilin{-,-}$ is non degenerate. By Theorem \ref{FrobeniuseanFrobenius}, $\mathcal{R}$ is a Frobenius ring. 
\end{proof}

\section{Codes with a Frobenius alphabet}\label{Codes}

Let $A$ be a finite ring. Considering $A$ as an additive group, it is well known that it is isomorphic to the group $\widehat{A}$ of its complex characters (the group homomorphisms from $A$ to $\mathbb{C}^\times$). The ring structure of $A$ leads to an additional $A$--bimodule structure on $\widehat{A}$ (see \cite{Wood:1999}). It was observed in \cite{Wood:1999} that $A$ is a Frobenius ring precisely when $\widehat{A}$ is cyclic either as a left or as a right $A$--module. A module generator is then called generating character. Let us see how this result fits in the theory so far developed in this paper for non projective Frobenius algebras.

Let $n$ be the characteristic of $A$,  and consider $\Zset_n \subseteq \Cset^\times$ as the group of $n$--th roots of unity. Recall that the abelian group $\Zset_n$ has a unique structure of ring, and that it is Frobenius. Now, we have
\[
\widehat{A} = \hom_{\Zset}(A,\Cset^\times) = \hom_{\Zset}(A,\Zset_n) = \hom_{\Zset_n}(A,\Zset_n) = A^*. 
\]
Thus, the $A$--bimodule $A^*$ is, in this case,  nothing but the bimodule of characters of $A$. 

 Therefore,  in view of  Corollary \ref{Wood}, the equivalent conditions of  Proposition \ref{Frobeniusean} characterize when the finite ring $A$ is Frobenius. Indeed, we may explicitly state the following reformulation of some results from \cite{Wood:1999}. 

\begin{theorem}\cite[Theorems 3.10 and 4.3]{Wood:1999}
Let $A$ be a finite ring of characteristic $n$. The following structures related to $A$ are in bijective correspondence.
\begin{enumerate}
\item\label{unoZ} Associative non degenerate bilinear forms 
$A \times A \to \mathbb{Z}_n.$
\item\label{dosZ} Isomorphisms of right $A$--modules  $\alpha : A \to \widehat{A}$.
\item\label{tresZ} Isomorphisms of left $A$--modules $\beta : A \to \widehat{A}$. 
\item\label{cuatroZ} Characters $\epsilon \in \widehat{A}$ such that $\epsilon A = \widehat{A}$. 
\item\label{cincoZ} Characters $\epsilon \in \widehat{A}$ such that $A \epsilon = \widehat{A}$.
\end{enumerate}
Moreover, $A$ is a Frobenius ring if and only if any of these structures does exist. 
\end{theorem}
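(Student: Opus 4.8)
The plan is to recognise the statement as nothing more than Proposition \ref{Frobeniusean} specialised to the base ring $K = \Zset_n$ and the algebra $R = A$, transported along the identification of the character bimodule $\widehat{A}$ with the dual $A^*$. First I would check that the hypotheses of Proposition \ref{Frobeniusean} are met: $\Zset_n$ is a commutative Frobenius ring, and $A$, being finite, is finitely generated (indeed of finite length) as a module over its characteristic subring $\Zset_n$. Thus the five structures of Proposition \ref{Frobeniusean} are available for $R = A$ over $K = \Zset_n$, and they are in bijective correspondence.

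The key step is the bimodule isomorphism $\widehat{A} \cong A^*$ recorded in the discussion preceding the theorem. One must verify that the chain $\hom_{\Zset}(A,\Cset^\times) = \hom_{\Zset}(A,\Zset_n) = \hom_{\Zset_n}(A,\Zset_n) = A^*$ is not merely an identification of abelian groups but of $A$-bimodules: the left and right $A$-actions on $\widehat{A}$ used in \cite{Wood:1999} are precomposition of a character with left and right multiplication by elements of $A$, and these are exactly the actions defining the bimodule structure on $A^* = \hom_{\Zset_n}(A,\Zset_n)$ introduced in Section \ref{Anuladores}. This verification is the only point requiring genuine attention, and hence is the main (though mild) obstacle; everything else is formal. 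Granting it, conditions \eqref{unoZ}, \eqref{dosZ}, \eqref{tresZ}, \eqref{cuatroZ}, \eqref{cincoZ} become, respectively, conditions \eqref{uno}, \eqref{dos}, \eqref{tres}, \eqref{cuatro}, \eqref{cinco} of Proposition \ref{Frobeniusean} with $R^*$ rewritten as $\widehat{A}$, so the asserted bijective correspondence is precisely the content of that Proposition.

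For the final clause I would invoke Corollary \ref{Wood} together with the definition of a non projective Frobenius algebra. By definition, $A$ is a non projective Frobenius $\Zset_n$-algebra exactly when a non degenerate associative $\Zset_n$-bilinear form on $A$ exists, that is, when structure \eqref{unoZ} exists; and Corollary \ref{Wood} equates this with $A$ being a Frobenius ring. Since \eqref{unoZ}--\eqref{cincoZ} are in bijective correspondence, the existence of any one of them is equivalent to the existence of \eqref{unoZ}, and therefore to $A$ being Frobenius, which completes the argument.
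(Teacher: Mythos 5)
Your proposal is correct and follows essentially the same route as the paper: the paper also establishes the identification $\widehat{A} = \hom_{\Zset}(A,\Cset^\times) = \hom_{\Zset_n}(A,\Zset_n) = A^*$ as $A$--bimodules in the discussion immediately preceding the theorem, then reads the five conditions off Proposition \ref{Frobeniusean} with $K=\Zset_n$, $R=A$, and obtains the final clause from Corollary \ref{Wood}. Your explicit check of the hypotheses of Proposition \ref{Frobeniusean} and of the compatibility of the bimodule actions is exactly the (mild) content the paper leaves to this identification.
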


Let $A$ be a finite Frobenius ring of characteristic $n$, and Frobenius functional (or generating character) $\epsilon : A \to \Zset_n$. Let $\bilin{-,-} : M \times N \to A$ a non degenerate bilinear form, where ${}_AM$ and $N_A$ are finite $A$--modules. We know that $\lt{{}_AM} = \lt{N_A}$ and, by Lemma  \ref{ndeg}, ${}_AM \cong {}_AN^*$ and $N_A \cong M^{*}_A$. Let us see that this framework covers the module-theoretical setting considered in \cite{Szabo/Wood:2017}.

\begin{example}\label{sequilin}
Consider an anti-automorphism $\theta : A \to A$, and a left module ${}_AM$. We can consider the right $A$--module $N$ whose underlying additive group is $M$, with the right $A$--module structure defined by $ma = \theta^{-1}(a)m$ for all $a \in A, m \in M$. Then, as already observed in \cite[Remark 4.9]{Szabo/Wood:2017}, a non degenerate $A$--bilinear form $\bilin{-.-} : M \times N \to A$ is, precisely, a non degenerate sesquilinear form in the sense of \cite[\S 3]{Szabo/Wood:2017}. When $M$ has to be considered as the word ambient space for $A$--linear codes, a natural choice is to put $M = A^n$ with its canonical left $A$--module structure.  
\end{example}

\begin{example}\label{Abilin}
Of course, the same finite abelian group $M$ may support both a left $A$--module structure and a right $A$--module structure. This case, considered in \cite[\S 4]{Szabo/Wood:2017}, is clearly covered by our general formalism. Here, a natural choice is $M = A^n$ with its canonical $A$--bimodule structure. 
\end{example}

Next, we will see how the results in \cite{Szabo/Wood:2017} are derived from our general theory. So, fix a non degenerate $A$--bilinear form $\bilin{-,-} : M \times N \to A$, where $A$ is a finite Frobenius ring of characteristic $n$, and ${}_AM$, $N_A$ are finite modules. Let $\epsilon : A \to \Zset_n$ be a Frobenius functional (that is, a generating character). According to examples \ref{sequilin} and \ref{Abilin}, this setting covers all cases considered in \cite{Szabo/Wood:2017}.

\begin{proposition}\cite[Proposition 3.7 and Proposition 4.7]{Szabo/Wood:2017}
If $T \subseteq M$ is a left $A$--submodule and $S \subseteq N$ is a right $A$--submodule, then 
\[
\rorth{(\lorth{S})} = S = ({}^\epsilon S)^\epsilon, \quad \lorth{(\rorth{T})} = T = {}^\epsilon(T^\epsilon). 
\]
\end{proposition}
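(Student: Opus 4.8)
The plan is to recognize this statement as a direct specialization of the machinery already assembled in Proposition \ref{ortogonales}, so that no new computation is required; the only genuine task is to check that the present hypotheses place us exactly in the situation of Section \ref{FrobeniuseanAnn}.

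First I would invoke Corollary \ref{Wood}: since $A$ is a finite ring of characteristic $n$ that is Frobenius, it is a non projective Frobenius $\Zset_n$-algebra. The base ring is $K = \Zset_n$, which is commutative and Frobenius, and $A$ is automatically finitely generated (indeed finite) as a $\Zset_n$-module. Under the identification $\widehat{A} = A^*$ recorded at the start of Section \ref{Codes}, where $A^* = \hom_{\Zset_n}(A,\Zset_n)$, the Frobenius functional in the sense of Section \ref{sec:Frobeniusean} is precisely the generating character $\epsilon : A \to \Zset_n$ fixed in the statement; this is what conditions \eqref{cuatro} and \eqref{cinco} of Proposition \ref{Frobeniusean} become once $A^*$ is read as $\widehat{A}$.

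With that identification in place, the data $\bilin{-,-} : M \times N \to A$ is exactly a non degenerate $A$-bilinear form on a pair of finite bimodules over a non projective Frobenius algebra $A$ with Frobenius functional $\epsilon$, which is the standing hypothesis of Proposition \ref{ortogonales}. I would then simply apply that proposition: it yields $\lorth{S} = {}^\epsilon S$ and $\rorth{T} = T^\epsilon$ for the $A$-submodules $S \subseteq N_A$ and $T \subseteq {}_AM$, together with the double-orthogonal identities $\rorth{(\lorth{S})} = S = ({}^\epsilon S)^\epsilon$ and $\lorth{(\rorth{T})} = T = {}^\epsilon(T^\epsilon)$, which are verbatim the four equalities asserted here.

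I do not expect a real obstacle, since the substantive content — that the $\epsilon$-orthogonals coincide with the honest bilinear-form orthogonals, and that the latter satisfy the double-annihilator property of Proposition \ref{dobledual} — has already been established. The single point to state carefully is the translation dictionary between the coding-theoretic vocabulary (finite Frobenius ring, generating character, complex characters $\widehat{A}$) and the algebraic vocabulary of the earlier sections ($\Zset_n$-algebra, Frobenius functional, dual $A^* = \hom_{\Zset_n}(A,\Zset_n)$); once this is spelled out, the proposition follows as an immediate corollary.
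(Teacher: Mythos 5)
Your proposal is correct and follows exactly the paper's own route: the paper's proof is the one-line observation that, in view of Corollary \ref{Wood}, the proposition is a particular case of Proposition \ref{ortogonales}. Your additional spelling-out of the dictionary $\widehat{A} = A^* = \hom_{\Zset_n}(A,\Zset_n)$ and of the generating character as Frobenius functional is precisely the identification the paper sets up at the start of Section \ref{Codes} before giving that proof.
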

\begin{proof}
In view of Corollary \ref{Wood}, this proposition is a particular case of Proposition \ref{ortogonales}.
\end{proof}

 By $\card{X}$ we denote the cardinal of a finite set $X$.

\begin{theorem}\cite[Theorem 3.6 and Theorem 4.6]{Szabo/Wood:2017}\label{cardinales}
Let $S \subseteq N_A$ and $T \subseteq {}_AM$ be submodules. Then
$\card{S}\card{\lorth{S}} = \card{M} = \card{N}$ and $\card{T}\card{\rorth{T}} = \card{M} = \card{N}$. 
\end{theorem}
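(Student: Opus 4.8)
The plan is to prove the cardinality identities by passing from lengths to cardinalities, using the isomorphisms already established in Remark \ref{dualorto} together with the fact that the dual with respect to the Frobenius base ring $K = \Zset_n$ preserves cardinality. First I would recall from Remark \ref{dualorto} (whose underlying hypotheses are in force here by Proposition \ref{ortogonales} and Corollary \ref{Wood}) that for a right $A$--submodule $S$ of $N$ we have the isomorphisms of $A$--modules $\lorth{S} \cong (N/X)^*$ with $X = S$, so in particular $\lorth{S} \cong (N/S)^*$, and dually $\rorth{T} \cong {}^*(M/T)$ for a left $A$--submodule $T$ of $M$. The key numerical input is that taking the $K$--dual $(-)^* = \hom_K(-,K)$ of a finite $K$--module does not change its cardinality: indeed, since $K = \Zset_n$ is finite and $\widehat{P} = \hom_\Zset(P,\Cset^\times) \cong P^*$ for any finite abelian group $P$ killed by $n$ (as spelled out in Section \ref{Codes}), we have $\card{P^*} = \card{\widehat{P}} = \card{P}$.

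The central computation then runs as follows. Applying the length identity of Lemma \ref{lmln}, $\lt{{}_AM} = \lt{N_A}$, and noting that $\card{M} = \card{N}$ follows from this together with the equality of the $K$--module lengths, I first establish $\card{M} = \card{N}$; this gives the common right-hand side of both stated equations. For the factorization, I would use $\lorth{S} \cong (N/S)^*$ to get
\[
\card{\lorth{S}} = \card{(N/S)^*} = \card{N/S} = \frac{\card{N}}{\card{S}},
\]
the middle equality being the cardinality-preservation of the $K$--dual. Rearranging yields $\card{S}\,\card{\lorth{S}} = \card{N} = \card{M}$. The argument for $\card{T}\,\card{\rorth{T}} = \card{M}$ is entirely symmetric, using the isomorphism $\rorth{T} \cong {}^*(M/T)$ in place of $\lorth{S} \cong (N/S)^*$ and the left-module version $\card{{}^*(M/T)} = \card{M/T}$.

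The main obstacle, and the only step that is not purely formal, is justifying $\card{P^*} = \card{P}$ for the relevant finite $K$--modules $P$. Over a field this is dimension-counting, but over $K = \Zset_n$ it requires the character-theoretic identification $P^* \cong \widehat{P}$ and Pontryagin duality for finite abelian groups, both of which are exactly the facts assembled in Section \ref{Codes}. Once this cardinality-preservation is in hand, everything else is a direct translation of the module isomorphisms of Remark \ref{dualorto} into counts via the multiplicativity of cardinality in short exact sequences of finite modules; I do not anticipate any difficulty there. An alternative route, avoiding explicit appeal to characters, would be to invoke that $S = \rorth{(\lorth{S})}$ and $\lorth{(\rorth{\lorth{S}})} = \lorth{S}$ from Proposition \ref{ortogonales} and run a symmetric double-count, but the character-duality argument is cleaner and is the one I would present.
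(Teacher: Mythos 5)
Your strategy---converting the isomorphisms of Remark \ref{dualorto} into cardinality counts via the fact that dualizing into $\Zset_n$ preserves the cardinality of finite modules---is in essence the paper's own proof, but as written your argument has two genuine gaps. The first is a conflation of two different duals. The isomorphism $\lorth{S} \cong (N/S)^*$ that you quote from Remark \ref{dualorto} comes from the $A$--valued form $\bilin{-,-}$, so there $(N/S)^* = \hom_A(N/S,A)$; the cardinality-preservation fact you establish (via characters and Pontryagin duality) concerns the $\Zset_n$--dual $\hom_{\Zset_n}(-,\Zset_n)$, which is a different functor. Hence the middle equality in your display, $\card{(N/S)^*} = \card{N/S}$, is not covered by your ``key numerical input.'' The equality $\card{\hom_A(X,A)} = \card{X}$ for finite $A$--modules is true, but it is not a formality: it encodes the Frobenius property of $A$, and proving it requires the bridge $\hom_A(X,A) \cong \hom_A(X,A^*) \cong \hom_{\Zset_n}(X,\Zset_n)$, using the right $A$--module isomorphism $A \cong A^*$ (Corollary \ref{Wood}) together with hom-tensor adjunction---a step you never supply. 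The paper sidesteps this by applying Remark \ref{dualorto} not to $\bilin{-,-}$ but to the $\Zset_n$--valued form $[-,-]_\epsilon = \epsilon \circ \bilin{-,-}$, whose non-degeneracy and whose orthogonals are controlled by Proposition \ref{ortogonales} (namely $\lorth{S} = {}^\epsilon S$); then the dual that appears is honestly $\hom_{\Zset_n}(S,\Zset_n) = \widehat{S}$, and $\card{M} = \card{{}^\epsilon S}\,\card{S}$ follows at once. Your proof is repairable along either route, but the repair is exactly the point where the Frobenius hypothesis enters, so it cannot be waved through.

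The second gap is your derivation of $\card{M} = \card{N}$: equality of lengths does \emph{not} imply equality of cardinalities, neither over $A$ nor over $K = \Zset_n$. For instance, over $\Zset_6$ the simple modules $\Zset_2$ and $\Zset_3$ both have length one but cardinalities $2$ and $3$, so ``$\lt{{}_AM} = \lt{N_A}$ together with equality of $K$--module lengths'' proves nothing about $\card{M}$ versus $\card{N}$. The correct argument is, once more, your own numerical input applied in the right place: Lemma \ref{ndeg}, applied to the non-degenerate $\Zset_n$--form $[-,-]_\epsilon$, yields an isomorphism $M \cong \hom_{\Zset_n}(N,\Zset_n) = \widehat{N}$ of $\Zset_n$--modules, whence $\card{M} = \card{\widehat{N}} = \card{N}$. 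This is exactly how the paper concludes its proof.
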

\begin{proof}
Consider the non degenerate $\Zset_n$--bilinear form $[-,-]_\epsilon : M \times N \to \Zset_n$ defined by $[m,n]_\epsilon = \epsilon(\bilin{m,n})$ for $m \in M, n \in N$. By Proposition \ref{ortogonales}, $\lorth{S} = {}^\epsilon S$, the latter being the left orthogonal of $S$ with respect to $[-,-]_\epsilon$. Now, by Remark \ref{dualorto}, we have an isomorphism of $\Zset_n$--modules $M/{}^\epsilon S \cong \hom_{\Zset_n}(S, \Zset_n)$. Therefore, $\card{M} = \card{{}^{\epsilon} S} \card{\hom_{\Zset_n}(S, \Zset_n)}= \card{{}^\epsilon S}\card{S}$, since $\hom_{\Zset_n}(S, \Zset_n)$ is nothing but the character group of $S$. Now, $M \cong \hom_{\Zset_n}(N, \Zset_n) \cong N$, which gives that $\card{M} = \card{N}$. 
\end{proof}

Recall that the Hamming weight $wt(x)$ of a vector $x \in A^n$ is defined by the number of nonzero components of $x$. Given an additive code $C \subseteq A^n$, the Hamming weight enumerator of $C$ is the complex polynomial in two variables $X,Y$
\[
W_C(X,Y) = \sum_{x \in C} X^{n-wt(x)}Y^{wt(x)}.
\]
 A general version of MacWilliams identity appears in \cite[Theorem 5.2]{Szabo/Wood:2017}. Unfortunately, it is not valid for every non degenerate bilinear form, as the following example shows.

\begin{example}
Set $A = \field[2]$, and let $\bilin{-,-} : \field[2]^2 \times \field[2]^2 \to \field[2]$ the non degenerate $\field[2]$--bilinear form defined by the non singular matrix 
\[
Q = \begin{pmatrix}  1 & 1 \\ 0 & 1 \end{pmatrix},
\]
that is, $\bilin{x,y} = x^t Q y$ for all $x, y \in \field[2]^2$. Let $C \subseteq \field[2]^2$ the linear code $C = \{(0,0), (1,0) \}$.  Its dual (with respect to $\bilin{-,-}$) is $\lorth{C} = \{(0,0), (1, 1) \}$. Therefore,
\[
W_C(X,Y) = X^2 + XY, \qquad W_{\lorth{C}}(X,Y) = X^2 + Y^2.
\]
If the MacWilliams identity stated in \cite[Theorem 5.2]{Szabo/Wood:2017} were applicable to $\bilin{-,-}$, then we would have
\[
W_{\lorth{C}}(X,Y) = \frac{1}{2}W_C(X+Y,X-Y) = X^2 + XY.
\]
Therefore, the identities from \cite[Theorem 5.2]{Szabo/Wood:2017} do not hold for a general non degenerate bilinear form. 
\end{example}

Theorem 5.2 in \cite{Szabo/Wood:2017} is a consequence of \cite[Theorem 11.3]{Wood:2009} whenever the isomorphisms
\[
\alpha : A^m \to \widehat{A^m}, \qquad \alpha(x)(y) = \epsilon(\bilin{x,y})
\]
and
\[
\beta : A^m \to \widehat{A^m}, \qquad \beta(x)(y) = \epsilon(\bilin{y,x})
\]
are isometries with respect to the Hamming weights in $A^m$ and $\widehat{A^m}$. This is the statement of \cite[Lemma 5.3]{Szabo/Wood:2017}, which does not hold for an arbitrary non degenerate $A$--bilinear form on $A^m$.  The argument is valid, however, for suitable bilinear forms. To be more precise, let $e_i$ denote, for $i= 1, \dots, m$,  the vector of $A^m$ whose only nonzero component is the $i$--th, which is $1$. The Hamming weight of $\varphi \in \widehat{A^m}$ is computed as the Hamming weight of its image under the group isomorphism
\[
\widehat{A^m} \to \widehat{A}^m, \qquad \varphi \mapsto (\varphi(e_1 \cdot), \dots, \varphi(e_m \cdot)),
\]
where $\varphi(e_i \cdot)(a) = \varphi(e_ia)$ for all $a \in A$ and $i = 1, \dots m$. That is, $wt(\varphi)$ is the number of indexes $i \in \{1, \dots, m\}$ such that $\varphi(e_i \cdot) \neq 0$. Recall that a square matrix with coefficients in a ring is said to be \emph{monomial} if each row and column contains exactly one nonzero entry and that nonzero entry is a unit.

\begin{lemma}\label{isometria}
The group isomorphism $\alpha : A^m \to \widehat{A^m}$ preserves the Hamming weight if and only if the matrix $Q = (\bilin{e_i, e_j})_{1 \leq i,j \leq m}$ is monomial. 
\end{lemma}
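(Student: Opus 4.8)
The plan is to translate the weight of $\alpha(x)$ into a condition on the matrix $Q$ and then recognize a classical characterization of weight--preserving linear maps. First I would fix $x = \sum_j x_j e_j \in A^m$ and compute the components $\alpha(x)(e_i\cdot)$: since the form is right $A$--linear in its second argument, $\alpha(x)(e_i\cdot)(a) = \epsilon(\bilin{x, e_i a}) = \epsilon(\bilin{x,e_i}\,a)$, so that $\alpha(x)(e_i\cdot) = \epsilon\, c_i$ where $c_i = \bilin{x,e_i} = \sum_j x_j Q_{ji}$. By Proposition \ref{Frobeniusean} the right $A$--module homomorphism $A \to A^*$ given by $b \mapsto \epsilon b$ is an isomorphism, in particular injective, so $\alpha(x)(e_i\cdot) \neq 0$ if and only if $c_i \neq 0$. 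Reading $x$ as the row vector $(x_1,\dots,x_m)$, this identifies $wt(\alpha(x))$ with the number of nonzero entries of $xQ$. Thus the lemma reduces to the purely matrix--theoretic claim that right multiplication $x \mapsto xQ$ preserves the Hamming weight on $A^m$ if and only if $Q$ is monomial.

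For the direction assuming $Q$ monomial, I would observe that each column $i$ of $Q$ has a single nonzero entry, a unit $q_i$ lying in some row $\rho(i)$, with $\rho$ a permutation of $\{1,\dots,m\}$. Then $(xQ)_i = x_{\rho(i)}q_i$, which is nonzero exactly when $x_{\rho(i)} \neq 0$. Since $\rho$ is a bijection, the count of nonzero components of $xQ$ equals that of $x$, so $\alpha$ preserves the weight.

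For the converse I would exploit the finiteness of $A$. Evaluating on $x = e_j$ gives $wt(e_j Q) = 1$, so each row of $Q$ has exactly one nonzero entry, say $q_j$ in column $\pi(j)$. If $\pi(j) = \pi(j')$ for $j \neq j'$, then $x = e_j + e_{j'}$ has weight $2$ while $xQ$ is supported in the single column $\pi(j)$, forcing $wt(xQ) \leq 1$ and contradicting weight preservation; hence $\pi$ is injective, thus bijective, so each column also carries exactly one nonzero entry. Finally, taking $x = x_j e_j$ shows that $x_j \mapsto x_j q_j$ is injective on $A$; as $A$ is finite this map is bijective, and I would upgrade this to invertibility of $q_j$ via the standard fact that in a finite ring a one--sided inverse is automatically a two--sided inverse. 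The step I expect to be the main obstacle is precisely this last one: converting the weight--preservation hypothesis (which only yields an injectivity--of--multiplication statement) into genuine unit status for the entries of $Q$, where the finiteness of $A$ is essential and must be invoked with care.
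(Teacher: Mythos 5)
Your proof is correct. The core computation coincides with the paper's: you both evaluate the components $\alpha(x)(e_i\cdot)$ as $a \mapsto \epsilon(\bilin{x,e_i}a)$ and use the injectivity of $b \mapsto \epsilon b$ (Proposition \ref{Frobeniusean}, equivalently non-degeneracy of $\bilin[\epsilon]{-,-}$) to detect which components vanish. Where you genuinely diverge is in how the forward implication is completed. The paper extracts from weight preservation only that each row of $Q$ has exactly one nonzero entry, and then settles the rest in a single sentence by appealing to non-degeneracy of $\bilin{-,-}$; unpacked, that sentence hides a pigeonhole argument for the permutation structure and the fact that a one-sided non-zero-divisor in a finite ring is a unit. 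You never invoke non-degeneracy in that direction: your $e_j + e_{j'}$ trick yields the permutation structure, and your injective-multiplication-plus-Dedekind-finiteness argument yields the units, all from weight preservation and finiteness of $A$ alone. This makes your version slightly more general (any matrix $Q$ over a finite ring for which $x \mapsto xQ$ preserves Hamming weight is monomial, degenerate form or not) and it makes explicit the finiteness input that the paper's terse conclusion leaves implicit. Your preliminary reduction of the lemma to the pure matrix statement $wt(xQ) = wt(x)$ also streamlines the converse: the paper argues it suffices to check $wt(\alpha(ae_i)) = 1$ and re-uses non-degeneracy there, whereas your column-wise formula $(xQ)_i = x_{\rho(i)}q_i$ handles arbitrary $x$ in one stroke, with unit-ness of the entries replacing the appeal to non-degeneracy.
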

\begin{proof}
Given $a, b \in A$, and $i, j \in \{ 1, \dots, m \}$, we have 
\begin{equation}\label{epsilonab}
\alpha_{ae_i}(e_j \cdot)(b) = \epsilon (\bilin{ae_i,e_jb}) = \epsilon(a\bilin{e_i,e_j}b).
\end{equation}
If $\alpha$ preserves the Hamming weight, then $wt(\alpha_{e_i}) = 1$ for each $i \in \{1, \dots, m \}$. Thus, given $i$, there is a unique $j \in \{1, \dots, m \}$ such that $\alpha_{e_i}(e_j \cdot) \neq 0$. By \eqref{epsilonab}, $\bilin{e_i,e_j} \neq 0$. Furthermore, for $k \neq j$, we get from \eqref{epsilonab} that $\epsilon(\bilin{e_i,e_k}b) = 0$ for every $b \in A$ and, hence, $\bilin{e_i,e_k} = 0$. We have thus seen that the $i$-th row of $Q$ has only one nonzero entry, namely $\bilin{e_i,e_j}$. This implies, taking into account that $\bilin{-,-}$ is non degenerate, that $Q$ is a monomial matrix. 

Conversely, assume that $Q$ is a monomial matrix. Since $\alpha$ is already an isomorphism of additive groups, in order to prove that it preserves the Hamming weight, it will suffice if we check that $wt(\alpha(ae_i)) = 1$ for every $0 \neq a \in A$ and every $i = 1, \dots, m$. Given $i$, let $j \in \{ 1, \dots, m \}$ the unique index such that $\bilin{e_i,e_j} \neq 0$. By \eqref{epsilonab}, $\alpha(ae_i)(e_k \cdot) = 0$ for all $k \neq j$. But $\alpha(ae_i)(e_j \cdot) \neq 0$ because, otherwise, $\epsilon(\bilin{ae_i,e_j}b) = 0$ for all $b \in A$, whence $\bilin{ae_i,e_j} = 0$. This is not possible, as $\bilin{-,-}$ is non degenerate  and $ae_i \neq 0$. 
\end{proof}

\begin{theorem}\cite[Theorem 5.2]{Szabo/Wood:2017}
Let $\bilin{-,-} : A^m \times A^m \to A$ be a non degenerate $A$--bilinear form such that the matrix $Q = (\bilin{e_i,e_j})_{1\leq i, j \leq m}$ is monomial. Let $C \subseteq A^m$be a left (resp. right) $A$--linear code $C \subseteq A^m$, and set $D = \rorth{C}$ (resp. $D = \lorth{C}$). Then
\[
W_D(X,Y) = \frac{1}{\card{C}}W_C(X + (\card{A} - 1)Y,X-Y).
\]
\end{theorem}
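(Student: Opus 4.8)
The plan is to reduce this MacWilliams-type identity to the classical form of \cite[Theorem 11.3]{Wood:2009}, which already provides the identity for the Hamming weight whenever one has an isometric isomorphism $A^m \to \widehat{A^m}$. The whole strategy hinges on Lemma \ref{isometria}: under the hypothesis that $Q = (\bilin{e_i,e_j})$ is monomial, that lemma tells us precisely that the group isomorphism $\alpha : A^m \to \widehat{A^m}$, given by $\alpha(x)(y) = \epsilon(\bilin{x,y})$, preserves the Hamming weight. So the first step is to invoke Lemma \ref{isometria} to conclude that $\alpha$ is a Hamming isometry, and to remark (by the symmetric version of the lemma applied to the transpose situation, or equally to the form $(x,y)\mapsto \bilin{y,x}$ whose matrix $Q^t$ is monomial precisely when $Q$ is) that $\beta$ is an isometry as well.

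With $\alpha$ an isometry in hand, the second step is to identify $D = \rorth{C}$ with the annihilator/dual of $C$ inside $\widehat{A^m}$ in the sense required by Wood's general MacWilliams theorem. Here I would use Remark \ref{dualorto}, specifically equation \eqref{N*:X}, together with Proposition \ref{ortogonales}: since $\epsilon$ is a generating character and $Q$ is non degenerate, the orthogonal $\rorth{C}$ with respect to $\bilin{-,-}$ coincides with ${}^\epsilon$- or $(-)^\epsilon$-type orthogonal with respect to the $\Zset_n$-valued form $[x,y]_\epsilon = \epsilon(\bilin{x,y})$, and the latter is exactly the character-theoretic annihilator $\{ y : \epsilon(\bilin{x,y}) = 0 \ \forall x \in C\}$. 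Transporting through the isometry $\alpha$, this annihilator matches the dual code appearing in \cite[Theorem 11.3]{Wood:2009}. The left/right bookkeeping (using $\alpha$ for one side, $\beta = $ the companion isomorphism for the other) handles the parenthetical left-versus-right cases in the statement uniformly.

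The third step is simply to quote \cite[Theorem 11.3]{Wood:2009}: once $A^m$ and its character group are isometrically identified and $D$ is the dual of $C$, the weight enumerators satisfy
\[
W_D(X,Y) = \frac{1}{\card{C}} W_C\bigl(X + (\card{A}-1)Y,\, X-Y\bigr),
\]
which is exactly the asserted identity, the coefficient $\card{A}-1$ being the standard one coming from the Fourier/MacWilliams transform over the alphabet $A$.

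I expect the main obstacle to be the second step — correctly matching the two notions of duality. One must be careful that the character-theoretic dual used by Wood is $\{ \varphi \in \widehat{A^m} : \varphi|_C = 0\}$, and verify that under $\alpha$ this pulls back to exactly $\rorth{C}$ (and not, say, to $\lorth{C}$, nor to the orthogonal with respect to $\bilin{y,x}$), since a swap here would silently exchange the left and right cases and invalidate the formula. This is precisely the content that equation \eqref{N*:X} and the equality $\lorth{S} = {}^\epsilon S$ from Proposition \ref{ortogonales} are designed to control, so the argument is available, but it requires threading the left/right module conventions through $\alpha$, $\beta$, and the anti-automorphism setup of Example \ref{sequilin} without error. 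Everything else — the isometry from Lemma \ref{isometria} and the citation of Wood's transform — is essentially bookkeeping once this identification is pinned down.
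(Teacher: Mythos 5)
Your proposal follows essentially the same route as the paper's proof: Lemma \ref{isometria} supplies the Hamming isometry, Proposition \ref{ortogonales} together with \eqref{N*:X} identifies the orthogonal code with the character-theoretic annihilator $\{ \varphi \in \widehat{A^m} : \varphi(C) = 0 \}$, and \cite[Theorem 11.3]{Wood:2009} then gives the identity, with the companion map covering the other parenthetical case. One correction to your second step: by \eqref{N*:X} it is $\alpha$ that carries $\lorth{C}$ (the right-code case) onto the annihilator of $C$, while $\beta$ carries $\rorth{C}$ (the left-code case) there — the opposite pairing to the one you wrote, which is exactly the silent left/right swap you yourself flagged as the danger, and which the paper avoids by running the $\alpha$ argument for $\lorth{C}$ and invoking the $\beta$-version of Lemma \ref{isometria} for $\rorth{C}$.
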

\begin{proof}
Under the hypotheses on $\bilin{-,-}$, $\alpha$ is an isometry by Lemma \ref{isometria}. By Proposition \ref{ortogonales}, $\lorth{C} = {}^\epsilon C$. Moreover, by \eqref{N*:X}, applied to the bilinear form $[-,-]_\epsilon = \epsilon \circ \bilin{-,-}$, $$\alpha(^\epsilon C) = \{ \varphi \in \widehat{A^m} : \varphi(C) = 0\}.$$ Thus, MacWilliams formula follows from \cite[Theorem 11.3]{Wood:2009} .   As for the $D=\rorth{C}$ case concerns, the same argument works, by virtue of an obvious version of Lemma \ref{isometria} for $\beta$. 
\end{proof}

We conclude this section by giving some examples of non projective Frobenius algebras for which the annihilators with respect to their associative bilinear forms are the Euclidean duals with respect to certain bases. The notation $X^\perp$ will be used to denote the Euclidean dual of a given code $X$, with respect to the Euclidean product which will be clear in each situation.

\begin{example}\label{groupcodes}
Let $G$ be a finite group, and consider the group algebra $A = \Zset_n G$, which is a basic example of Frobenius $\Zset_n$--algebra, with the associative non degenerate bilinear form defined by 
\[ \bilin{\sum_{g \in G}\alpha_gg,\sum_{h\in G}\beta_hh} = \sum_{g \in G}\alpha_g\beta_{g^{-1}}. \] Note that, if we denote by $[-,-]$ the obvious euclidean bilinear form on $A$, then  $[a,b] = \bilin{a,\theta(b)}$, for all $a, b \in A$, where $\theta :A \to A$ denotes the involution determined by $\theta(g) = g^{-1}$ for $g \in G$. Hence, $S^{\perp} = \theta(\rorth{S})$ for every subset $S$ of $A$, which implies that if $C$ is a left ideal of $A$, then $C^{\perp}$ is a left ideal of $A$, too. 
\end{example}

Let $\sigma$ be an automorphism of a finite Frobenius ring  $A$ of characteristic $n$. Consider a Frobenius functional (or generating character) $\epsilon : A \to \Zset_n$, and the non degenerate $\Zset_n$--bilinear form given by $\bilin{a,b} = \epsilon(ab)$ for all $a, b \in A$. Let $m$ be a multiple of the order of $\sigma$. The polynomial $x^m - 1 \in S = A[x;\sigma]$ is central, so we may consider the finite ring $\mathcal{A} = S/S(x^m-1)$, which is a Frobenius ring and a non projective Frobenius $\Zset_n$--algebra, according to Proposition \ref{Frobeniusskew}, with the nondegenerate associative $\Zset_n$--bilinear form 
\begin{equation}\label{biltraza}
\langle -,- \rangle_\mathcal{A} : \mathcal{A} \times \mathcal{A} \to \Zset_n, \qquad \langle f, g \rangle = \epsilon((fg)_0).
\end{equation}
On the other hand, $\mathcal{A}$ is a free $A$--module of rank $m$ with basis $\{1, x, \dots, x^{m-1}\}$, so we have the Euclidean $A$--bilinear form
\[
[-,-] : \mathcal{A} \times \mathcal{A} \to A, \qquad [f,g] = \sum_{i=0}^{m-1}f_ig_i.
\]

\begin{proposition}\label{sigmacyclic}
Let $\theta : \mathcal{A} \to \mathcal{A}$ be defined by 
\[
\theta \left(\sum_{i=0}^{m-1}f_ix^i\right) = \sum_{i=0}^{m-1}\sigma^{-i}(f_i)x^{-i}.
\] 
Then $\theta$ is a $\Zset_n$--algebra involution. Moreover, for every left $A$--submodule $V$ of $\mathcal{A}$, we have
\[
V^\perp = \lorth{\theta(V)}. 
\]
\end{proposition}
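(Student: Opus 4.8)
The statement has two parts: that $\theta$ is a $\Zset_n$--algebra involution, and that $V^\perp = \lorth{\theta(V)}$, where $\lorth{(-)}$ is the left orthogonal taken with respect to the associative form $\langle -,- \rangle_{\mathcal{A}}$ of \eqref{biltraza} (the one making $\mathcal{A}$ non projective Frobenius by Theorem \ref{Frobeniusskew}). The device that links both parts is a single identity relating the $A$--valued Euclidean form $[-,-]$ to the $\Zset_n$--valued form $\langle -,- \rangle_{\mathcal{A}}$ through $\theta$ and the functional $\epsilon$. The plan is to prove this identity first, then read each part off from it together with results already available. I would compute, directly in $\mathcal{A}$ using the commutation rule $x^i c = \sigma^i(c)x^i$ and the reduction $x^m = 1$, the degree--zero coefficient of the product $a\,\theta(b)$: the cross terms contribute to degree $0$ exactly when the exponents match, the $\sigma$--twists then cancel, and one is left with $(a\theta(b))_0 = \sum_i a_i b_i = [a,b]$. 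Applying $\epsilon$ yields
\[
\langle a, \theta(b) \rangle_{\mathcal{A}} = \epsilon\big( (a\theta(b))_0 \big) = \epsilon([a,b]), \qquad a, b \in \mathcal{A}.
\]

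For the first part, additivity and $\Zset_n$--linearity of $\theta$ are immediate from the defining formula, since $\theta$ merely reindexes the homogeneous components and twists their coefficients by powers of $\sigma$. For $\theta^2 = \mathrm{id}$, applying $\theta$ twice routes the degree--$i$ component through $\sigma^{-i}$ and then back, and the equality $\sigma^m = \mathrm{id}$ (as $m$ is a multiple of the order of $\sigma$) makes the twists cancel, recovering the original component. Anti--multiplicativity $\theta(ab) = \theta(b)\theta(a)$ I would verify on monomials $a = \lambda x^i$, $b = \mu x^j$, matching exponents and coefficients modulo $x^m-1$ with the help of $x^i c = \sigma^i(c)x^i$ and $\sigma^m = \mathrm{id}$. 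Note that $\theta$ fixes $A$ elementwise, so as an anti--automorphism it carries the left $A$--submodule $V$ to a right $A$--submodule $\theta(V)$.

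For the orthogonality identity I would first rewrite the target using the key identity: by definition of the left orthogonal and the displayed formula,
\[
\lorth{\theta(V)} = \{ a \in \mathcal{A} : \langle a, \theta(v) \rangle_{\mathcal{A}} = 0 \ \forall v \in V \} = \{ a \in \mathcal{A} : \epsilon([a,v]) = 0 \ \forall v \in V \} = {}^{\epsilon}V ,
\]
the last being the scalar orthogonal of $V$ with respect to the $\Zset_n$--valued form $\epsilon \circ [-,-]$. It remains to identify ${}^{\epsilon}V$ with the $A$--valued Euclidean dual $V^\perp = \{ a : [a,v] = 0 \ \forall v \in V\}$. Here I would exploit that the coefficient ring $A$ is itself a non projective Frobenius $\Zset_n$--algebra with Frobenius functional $\epsilon$ (Corollary \ref{Wood}), and that $[-,-] : \mathcal{A} \times \mathcal{A} \to A$ is a non degenerate associative $A$--bilinear form on the $A$--bimodule $\mathcal{A}$ (non degeneracy is clear since $[f, x^i] = f_i$ recovers every coefficient). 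Proposition \ref{ortogonales}, applied with base ring $A$, functional $\epsilon$ and this form, then gives precisely ${}^{\epsilon}V = V^\perp$, that is, the scalar condition $\epsilon([a,v]) = 0$ and the $A$--valued condition $[a,v] = 0$ cut out the same set. Combining, $V^\perp = {}^{\epsilon}V = \lorth{\theta(V)}$.

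The step I expect to be the main obstacle is this last identification, namely the passage from $\epsilon([a,v]) = 0$ back to $[a,v] = 0$. This is exactly where the Frobenius hypothesis genuinely enters, through the non degeneracy of $\bilin[\epsilon]{-,-}$ on $A$ (Proposition \ref{Frobeniusean}), packaged for submodules in Proposition \ref{ortogonales}; stripping $\epsilon$ requires feeding in enough test elements from the module $V$ and the base form, and the real care lies in casting $[-,-]$ as a bona fide non degenerate $A$--bilinear form on the bimodule $\mathcal{A}$ and in keeping the left and right $A$--actions (and the two base rings $A$ and $\Zset_n$) aligned on the correct sides while transporting orthogonals through $\epsilon$ and $\theta$.
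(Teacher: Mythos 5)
Your proof rests on the same key identity as the paper's, namely $(f\theta(g))_0 = [f,g]$, hence $\langle f,\theta(g)\rangle_{\mathcal{A}} = \epsilon([f,g])$, but you close the argument differently. The paper extracts from this identity only the easy inclusion $V^\perp \subseteq \lorth{\theta(V)}$ and then forces equality by counting: Theorem \ref{cardinales}, applied to both $\langle -,-\rangle_{\mathcal{A}}$ (over $\Zset_n$) and $[-,-]$ (over $A$), gives $\card{\lorth{\theta(V)}} = \card{\mathcal{A}}/\card{\theta(V)} = \card{\mathcal{A}}/\card{V} = \card{V^\perp}$. You instead rewrite $\lorth{\theta(V)}$ as the $\epsilon$--orthogonal of $V$ for $\epsilon\circ[-,-]$ and invoke Proposition \ref{ortogonales} for the Euclidean form to strip $\epsilon$, getting both inclusions at once. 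Both routes stay inside the paper's machinery (Theorem \ref{cardinales} is itself deduced from Proposition \ref{ortogonales}), so yours is arguably the more direct one and uses nothing beyond finite length, while the paper's counting argument is shorter to write given that Theorem \ref{cardinales} is already on record. One point you flag but should pin down: $[-,-]$ is $A$--bilinear in the paper's sense only when the second copy of $\mathcal{A}$ carries the \emph{coefficient-wise} right $A$--action, since ring multiplication twists by $\sigma$, i.e. $[f,ga] = \sum_i f_i g_i \sigma^i(a) \neq [f,g]a$ in general. Moreover, $V$ is only a \emph{left} $A$--submodule, so the half of Proposition \ref{ortogonales} you need is $\rorth{T} = T^\epsilon$ with $T = V$, after which the symmetry of $[-,-]$ identifies $\rorth{V}$ with $V^\perp$ and $V^\epsilon$ with $\lorth{\theta(V)}$; applying the other half (for right submodules) to $V$ would be illegitimate. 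The paper needs exactly the same casting to apply Theorem \ref{cardinales} to $[-,-]$, so this is not a defect of your route, only the place where the care you anticipate actually lands.

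For the involution claim the paper just cites \cite[Lemma 26]{Gomez/alt:2018}, whereas you propose a direct monomial verification. Be aware that this check, if carried out, genuinely requires the coefficient ring $A$ to be commutative: $\theta$ fixes $A$ pointwise, so anti-multiplicativity already on constants forces $\lambda\mu=\mu\lambda$, and on monomials one gets $\theta\bigl(\lambda x^i\cdot \mu x^j\bigr)=\sigma^{-i-j}(\lambda)\sigma^{-j}(\mu)x^{-i-j}$ against $\theta(\mu x^j)\theta(\lambda x^i)=\sigma^{-j}(\mu)\sigma^{-i-j}(\lambda)x^{-i-j}$, with the factors in opposite orders. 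So your ``straightforward'' verification would surface a commutativity hypothesis that the paper absorbs silently into the citation. This affects only the involution statement (and the subsequent corollary on $\sigma$--cyclic codes, which needs $\theta$ to send left ideals to right ideals); your derivation of $V^\perp = \lorth{\theta(V)}$, like the paper's, uses only that $\theta$ is a bijection plus the displayed identity, and both of these hold for arbitrary finite Frobenius $A$.
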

\begin{proof}
By \cite[Lemma 26]{Gomez/alt:2018}, $\theta$ is a $\Zset_n$--algebra involution. Now, for $f, g \in \mathcal{A}$, a straightforward computation gives that $(f\theta(g))_0 = [f,g]$. Therefore, by \eqref{biltraza}, 
\begin{equation}\label{FrobEu}
\langle f, \theta(g) \rangle_{\mathcal{A}} = \epsilon([f,g]). 
\end{equation}
Given $f \in V^\perp$, and $\theta(g) \in \theta(V)$, we get from \eqref{FrobEu} that $\langle f, \theta(g)\rangle_{\mathcal{A}} = 0$. Hence, $f \in \lorth{\theta(V)}$, and we obtain the inclusion of $\Zset_n$--modules $V^\perp \subseteq \lorth{\theta(V)}$. Now, since $\langle -,- \rangle_{\mathcal{A}}$ and $[-,-]$ are non degenerate, we may apply Theorem \ref{cardinales} to both bilinear forms, and get
\[
\card{\lorth{\theta (V)}} = \frac{\card{\mathcal{A}}}{\card{\theta(V)}} = \frac{\card{\mathcal{A}}}{\card{V}} = \card{V^{\perp}}
\]
which implies the equality $V^\perp = \lorth{\theta(V)}$. 
\end{proof}

Following \cite{Boucher/Ulmer:2009}, left  ideals of $\mathcal{A}$ are called $\sigma$--cyclic codes. The following consequence generalizes \cite[Corollary 18]{Boucher/Ulmer:2009} from fields to finite Frobenius rings. 

\begin{corollary}
If $C$ is a $\sigma$--cyclic code, then $C^\perp$ is a $\sigma$--cyclic code. 
\end{corollary}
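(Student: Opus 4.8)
The plan is to identify the dual $C^{\perp}$ with an annihilator in $\mathcal{A}$ and then invoke the fact that annihilators are automatically one-sided ideals. Let $C$ be a $\sigma$--cyclic code, that is, a left ideal of $\mathcal{A}$. By Proposition \ref{sigmacyclic} we have $C^{\perp} = \lorth{\theta(C)}$, where $\lorth{(-)}$ denotes the left orthogonal with respect to the associative non degenerate $\Zset_n$--bilinear form $\langle -,- \rangle_{\mathcal{A}}$ of \eqref{biltraza}. Thus the whole task reduces to showing that $\lorth{\theta(C)}$ is again a left ideal of $\mathcal{A}$.

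First I would record that $\theta$, being a $\Zset_n$--algebra involution by Proposition \ref{sigmacyclic}, is in particular an anti-automorphism of $\mathcal{A}$, and hence interchanges left and right ideals. Concretely, since $\theta$ is bijective one has $\theta(C)\mathcal{A} = \theta(C)\theta(\mathcal{A}) = \theta(\mathcal{A}C) \subseteq \theta(C)$, using $\theta(\mathcal{A}) = \mathcal{A}$, the product-reversing property of $\theta$, and that $C$ is a left ideal. Therefore $\theta(C)$ is a right ideal of $\mathcal{A}$.

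The decisive step is then to read $\lorth{\theta(C)}$ as an annihilator. Here $\mathcal{A}$ is a non projective Frobenius $\Zset_n$--algebra (Proposition \ref{Frobeniusskew}) whose Frobenius functional is $g \mapsto \epsilon(g_0)$, and $\langle f,g\rangle_{\mathcal{A}} = \epsilon((fg)_0)$ is precisely the form obtained by composing the multiplication of $\mathcal{A}$ with this functional. Consequently $\lorth{(-)}$ coincides with the $\epsilon$--orthogonal of Corollary \ref{anuladores}, and applying that corollary to the right ideal $\theta(C)$ yields $\lorth{\theta(C)} = \lann{\mathcal{A}}{\theta(C)}$. Since the left annihilator of any subset of $\mathcal{A}$ is closed under left multiplication, $\lann{\mathcal{A}}{\theta(C)}$ is a left ideal; hence $C^{\perp} = \lorth{\theta(C)}$ is a left ideal, i.e. a $\sigma$--cyclic code.

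I expect the only genuinely delicate point to be the identification in the third paragraph: one must match the $\Zset_n$--valued Frobenius form $\langle -,- \rangle_{\mathcal{A}}$ with the form $[-,-]_{\epsilon}$ of Corollary \ref{anuladores}, and observe that it is exactly because $\theta(C)$ is a \emph{right} ideal (together with the non degeneracy of $\langle -,- \rangle_{\mathcal{A}}$) that the $\Zset_n$--orthogonal collapses to the honest left annihilator $\lann{\mathcal{A}}{\theta(C)}$. This is why the bimodule lattice anti-isomorphism of Corollary \ref{biantireticulo} is not directly applicable to $\theta(C)$, whereas the annihilator route is. Everything else is a formal consequence of results already established.
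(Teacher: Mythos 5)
Your proof is correct and follows essentially the same route as the paper's: pass from the left ideal $C$ to the right ideal $\theta(C)$ via the involution $\theta$, note that $\lorth{\theta(C)}$ is then a left ideal, and conclude with Proposition \ref{sigmacyclic}. The only difference is that you spell out, via Corollary \ref{anuladores}, the identification $\lorth{\theta(C)} = \lann{\mathcal{A}}{\theta(C)}$ on which the paper's ``therefore'' implicitly rests.
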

\begin{proof}
Since $C$ is a left ideal of $\mathcal{A}$, we get that $\theta(C)$ is a right ideal of $\mathcal{A}$ and, therefore, $\lorth{\theta(C)}$ is a left ideal. By Proposition \ref{sigmacyclic}, $C^\perp$ becomes a left ideal of $\mathcal{A}$. 
\end{proof}

\textbf{Aknowledgement:} We thank both referees for their useful reports.

\end{document}